\documentclass[11pt,a4paper]{article}
\usepackage{epsf,epsfig,amsfonts,amsgen,amsmath,amstext,amsbsy,amsopn,amsthm}
\usepackage{amsmath}
\usepackage{enumerate}
\usepackage{hhline}
\usepackage{multirow}
\usepackage{multicol}
\usepackage{booktabs}
\usepackage{lineno}
\usepackage{amsfonts,amsthm,amssymb,bm}
\usepackage{amsfonts}
\usepackage{enumitem}
\usepackage{graphics}
\usepackage{latexsym,bm}
\usepackage{amsfonts,amsthm,amssymb,bbding}
\usepackage{indentfirst}
\usepackage{graphicx}
\usepackage{authblk}

\usepackage{color}
\usepackage[colorlinks=true,anchorcolor=blue,filecolor=blue,linkcolor=red,urlcolor=blue,citecolor=blue]{hyperref}
\usepackage{float}
\usepackage{subcaption}
\usepackage{tikz,enumerate}
\usetikzlibrary{calc}
\usepackage{geometry}
\newcommand{\n}{\noindent}

\expandafter\let\expandafter\oldproof\csname\string\proof\endcsname
\let\oldendproof\endproof
\renewenvironment{proof}[1][\proofname]{%
	\oldproof[\normalfont\bfseries #1]%
}{\oldendproof}
\newenvironment{subproof}[1][\normalfont\it Subproof]{%
	\begin{proof}[#1]%
	}{%
	\end{proof}%
}

\newtheorem{theorem}{Theorem}[section]

\newtheorem{lemma}[theorem]{Lemma}
\newtheorem{claim}{Claim}[section]

\newtheorem{conjecture}[theorem]{Conjecture}

\renewcommand{\leq}{\leqslant}
\renewcommand{\le}{\leqslant}
\renewcommand{\geq}{\geqslant}
\renewcommand{\ge}{\geqslant}

\usepackage[numbers,sort&compress]{natbib}

\begin{document}

\title{The optimal $\chi$-bound for $\{P_6, \text{dart}, K_4\}$-free graphs}

\author[1]{Yidong Zhou\thanks{Email: 1120240329@mail.nankai.edu.cn}}
\author[2]{Kaiyang Lan\thanks{Email: kylan95@mnnu.edu.cn. Corresponding author.}}

\affil[1]{College of Computer Science, Nankai University, Tianjin, 300350, China}
\affil[2]{School of Mathematics and Statistics, Minnan Normal University, Zhangzhou, 363000, Fujian, China}

\date{\today}
\maketitle



\begin{abstract}
	A \textit{diamond} is a graph obtained from \(K_4\) by removing an edge, and a \textit{dart} 
	is a graph obtained from a diamond by adding a pendant edge to a vertex of degree 3.
	We prove that every $\{P_6, \text{dart}, K_4\}$-free graph is 6-colorable. This improves the previous bound of 7 due to Hong and Xu \cite{HongXu2025} and resolves their open question on the optimality of the bound.
	Our result also extends a theorem of Karthick and Mishra~\cite{KarthickMishra2018}, who proved 6-colorability for the class of \(\{P_6, \text{diamond}, K_4\}\)-free graphs.	
\end{abstract}

{\bf Keywords: Hereditary graph classes, Chromatic number, Clique number} 

{\bf 2020 AMS Subject Classifications: 05C15, 05C75} 

\section{Introduction}

All graphs in this note are finite, simple, and undirected.
For a positive integer $k$, a \textit{$k$-coloring} of a graph $G$ assigns a color from $\{1,2,\ldots,k\}$ to each vertex of $G$ such that adjacent vertices receive distinct colors.
We say that $G$ is \textit{$k$-colorable} if $G$ admits a $k$-coloring.
The \textit{chromatic number} of $G$, denoted by $\chi(G)$, is the minimum integer $k$ such that $G$ is $k$-colorable.
The \textit{clique number} of $G$, denoted by $\omega(G)$, is the maximum size of a clique in $G$.
A graph $G$ is \textit{perfect} if $\chi(H)=\omega(H)$ for every induced subgraph $H$ of $G$.

Let $P_t$, $C_t$, and $K_t$ denote the path, cycle, and complete graph on $t$ vertices, respectively.
For a set $\mathcal{H}$ of graphs, we say that $G$ is $\mathcal{H}$-free if $G$ contains no induced subgraph isomorphic to any member of $\mathcal{H}$.
A graph class is \textit{hereditary} if it can be defined by a set of forbidden induced subgraphs.
A hereditary graph class $\mathcal{G}$ is \textit{$\chi$-bounded}~\cite{Gyarfas1973} if there exists a function $f:\mathbb{N}\to\mathbb{N}$ such that for every $G\in\mathcal{G}$ and every induced subgraph $H$ of $G$, we have $\chi(H)\le f(\omega(H))$.
Such a function $f$ is called a \emph{$\chi$-binding function} for $\mathcal{G}$.
The class $\mathcal{G}$ is \textit{polynomially $\chi$-bounded} if $f$ can be chosen to be bounded above by a polynomial in $\omega(H)$; equivalently, there exists a polynomial $p$ such that $\chi(H)\le p(\omega(H))$ for every $G\in\mathcal{G}$ and every induced subgraph $H$ of $G$.
It is clear that $\chi(G)\ge \omega(G)$ for every graph $G$, but in general $\chi(G)$ cannot be bounded from above by any function of $\omega(G)$; for instance, there exist triangle-free graphs with arbitrarily large chromatic number~\cite{Erdos1959,Mycielski1955,Zykov1949}.

It is easy to see that a necessary condition for the class of $H$-free graphs to be $\chi$-bounded is that $H$ is a forest.
The famous Gy\'arf\'as--Sumner conjecture \cite{Gyarfas1973,Sumner1981} asserts the converse:

\begin{conjecture}[\cite{Gyarfas1973,Sumner1981}]
	For every forest $T$, the class of $T$-free graphs is $\chi$-bounded.
\end{conjecture}

Gy\'arf\'as \cite{Gyarfas1987} proved the conjecture for $T=P_t$: every $P_t$-free graph $G$ satisfies $\chi(G)\le (t-1)^{\omega(G)-1}$.
Note that this $\chi$-binding function is exponential in $\omega(G)$.
Esperet \cite{Esperet2017} conjectured that every $\chi$-bounded class is polynomially $\chi$-bounded.
This conjecture was recently disproved by Bria\'nski, Davies, and Walczak \cite{BrianskiDaviesWalczak2024}, who constructed $\chi$-bounded classes that are not polynomially $\chi$-bounded.
However, the question remains wide open for $P_t$-free graphs.

The answer is trivial for $t\le 4$, as $P_4$-free graphs are perfect.
For $t=5$, a recent result of Nguyen \cite{Nguyen2025} gives a polynomial bound.
Apart from these, little is known.
Consequently, researchers have turned their attention to subclasses of $P_t$-free graphs.
One natural direction is to study $P_t$-free graphs with additional forbidden induced subgraphs.
For instance, every $\{P_5,C_3\}$-free graph is 3-colorable, and the bound is tight as witnessed by $C_5$ \cite{WoegingerSgall2001}; every $\{P_5,C_4\}$-free graph satisfies $\chi(G)\le \lceil \frac{5}{4}\omega(G)\rceil$, and equal-size blowups of $C_5$ show optimality \cite{ChoudumKarthickShalu2007}.
For $t=6$, optimal $\chi$-binding functions have also been obtained for $C_3$ and $C_4$ \cite{BrandstadtKlembtMahfud2006,KarthickMaffray2019}.
For $t\ge 7$, Gravier, Ho\`ang, and Maffray \cite{GravierHoangMaffray2003} proved that every $\{P_t,C_3\}$-free graph $G$ satisfies $\chi(G)\le t-2$; whether this bound is optimal remains open.
More recently, Huang \cite{Huang2024} showed that every $\{P_7,C_4,C_5\}$-free graph $G$ satisfies $\chi(G)\le \frac{11}{9}\omega(G)$.

\begin{figure}[htbp]
	
	\centering
	
	\begin{subfigure}{.25\textwidth}
		\centering
		\tikzstyle{v}=[circle, draw, solid, fill=black, inner sep=0pt, minimum width=3pt]
		\begin{tikzpicture}
			\node[v, label=above:$u_1$] (u1) at (0, 1.2) {};
			\node[v, label=left:$u_2$] (u2) at (-1.2, 0) {};
			\node[v, label=right:$u_4$] (u3) at (1.2, 0) {};
			\node[v, label=below:$u_3$] (u4) at (0, -1.2) {};
			
			\draw[blue, thick] (u1) -- (u2);
			\draw[blue, thick] (u2) -- (u4);
			\draw[blue, thick] (u4) -- (u3);
			\draw[blue, thick] (u4) -- (u1);
			\draw[blue, thick] (u1) -- (u3);
		\end{tikzpicture}
		\caption{Diamond.}
		\label{fig:diamond}
	\end{subfigure}
	\begin{subfigure}{.25\textwidth}
		\centering
		\tikzstyle{v}=[circle, draw, solid, fill=black, inner sep=0pt, minimum width=3pt]
		\begin{tikzpicture}
			\node[v] (u1) at (0, 1.2) {};
			\node[v] (u2) at (-1.2, 0) {};
			\node[v] (u3) at (1.2, 0) {};
			\node[v] (u4) at (0, -1.2) {};
			\node[v] (u5) at (0, 2.4) {};
			
			\draw[blue, thick] (u1) -- (u2);
			\draw[blue, thick] (u2) -- (u4);
			\draw[blue, thick] (u4) -- (u3);
			\draw[blue, thick] (u4) -- (u1);
			\draw[blue, thick] (u1) -- (u3);
			\draw[blue, thick] (u1) -- (u5);
		\end{tikzpicture}
		\caption{Dart.}
		\label{fig:dart}
	\end{subfigure}
	\begin{subfigure}{.25\textwidth}
		\centering
		\tikzstyle{v}=[circle, draw, solid, fill=black, inner sep=0pt, minimum width=3pt]
		\begin{tikzpicture}
			\node[v] (u1) at (0, 1.2) {};
			\node[v] (u2) at (-1.2, 0) {};
			\node[v] (u3) at (1.2, 0) {};
			\node[v] (u4) at (0, -1.2) {};
			\node[v] (u5) at (0, 2.4) {};
			
			\draw[blue, thick] (u1) -- (u2);
			\draw[blue, thick] (u4) -- (u3);
			\draw[blue, thick] (u1) -- (u3);
			\draw[blue, thick] (u1) -- (u5);
		\end{tikzpicture}
		\caption{Chair.}
		\label{fig:chair}
	\end{subfigure}
	
	\caption{The graphs diamond, dart, and chair.}\label{fig:threegraphs}
\end{figure}

A \textit{diamond} is the graph obtained from $K_4$ by removing an edge, a \textit{dart} is the graph obtained from a diamond by adding a pendant edge to a vertex of degree 3 of the diamond.
A \textit{chair} is the graph obtained from $P_3$ by adding a pendant edge to each vertex (see \autoref{fig:threegraphs}).  
Karthick and Maffray \cite{KarthickMaffray2016} proved that $\chi(G) \le \omega(G) + 1$ for $\{P_5, \text{diamond}\}$-free graphs. Brause and Gei{\ss}er \cite{BrauseGeisser2021} presented a binding function $\Theta(\frac{\omega^2(G)}{\log \omega(G)})$ for $\{P_5, \text{dart}\}$-free graphs.
For related subclasses of \(P_6\)-free graphs, Karthick and Maffray \cite{KarthickMaffray2016} also proved that $\chi(G) \le \omega(G) + 1$ for every $\{P_6, \text{chair}, \text{diamond}\}$-free graph.
Cameron, Huang and Merkel~\cite{CameronHuangMerkel2021} proved that \(\chi(G) \le \omega(G) + 3\) for $\{P_6, \text{diamond}\}$-free graph.
 Lan, Zhou, and Liu \cite{LanZhouLiu2023} proved that \(\chi(G) \le \max\{\omega(G),3\}\) for \(\{P_6,C_4,\text{diamond}\}\)-free graphs.
The currently best known upper bound for $P_6$-free graphs is due to Gravier, Ho\`ang, and Maffray \cite{GravierHoangMaffray2003}.

\begin{theorem}[\cite{GravierHoangMaffray2003}]
	\label{thm:Gravier}
	Let $G$ be a $P_k$-free graph with $k \ge 4$ and $\omega(G) \ge 2$. Then \(\chi(G) \le (k-2)^{\omega(G)-1}\).
\end{theorem}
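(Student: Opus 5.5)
We prove the statement by induction on $\omega(G)$, using Gy\'arf\'as's path argument in the sharpened form of Gravier, Ho\`ang and Maffray.

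\emph{Base case.} If $\omega(G)=2$, $G$ is triangle-free and we must show $\chi(G)\le k-2$. Every graph with $\omega=1$ is edgeless and $1$-colourable, so we cannot simply apply an inductive step here. We handle this case by the same growing-path argument, with the levels below being independent sets.

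\emph{Inductive step.} We may assume $G$ is connected. The core statement we prove is the following.

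\begin{quote}
If $G$ is connected and $P_k$-free, then $V(G)$ can be partitioned into sets $L_1,\dots,L_{k-2}$ such that each $G[L_i]$ is contained in the neighbourhood of a vertex set inducing a clique-bounded structure. Consequently $\chi(G[L_i])\le \max\chi$ over induced subgraphs with clique number at most $\omega(G)-1$.
\end{quote}

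Given this, induction yields $\chi(G)\le (k-2)\cdot (k-2)^{\omega-2}=(k-2)^{\omega-1}$.

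\emph{Constructing the partition.} We follow Gy\'arf\'as. Pick a vertex $v_1$ and set $C_1=G-N[v_1]$. More precisely, we grow an induced path $v_1v_2\cdots$ together with a connected component $C_j$ of $G-\bigcup_{i\le j}N[v_i]$, chosen so that $v_j$ has a neighbour in $C_j$ and $\chi(C_j)$ is large. At each stage the vertices of $N(v_j)\setminus\bigcup_{i<j}N[v_i]$ form a set $A_j$ in the neighbourhood of $v_j$, so $\omega(G[A_j])\le\omega-1$. Because $G$ is $P_k$-free, the path must stop after at most $k-1$ vertices. The final component is then empty, or can be absorbed.

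\emph{Saving one factor.} The key bookkeeping that improves $k-1$ to $k-2$ is this: $N(v_1)$ and the last two neighbourhood layers can be merged. Concretely, choose $v_{j+1}\in A_j$ adjacent to $C_j$, and put the vertices of $A_j$ with no neighbour in $C_j$ into the same colour class as a previous layer. The sets $A_j\cap N(C_j)$ and $A_j\setminus N(C_j)$ are anticomplete across levels $j$ and $j+2$, so colours can be reused with period two in a suitable way. Alternatively, follow the cited proof: colour $N(v_1)$ with $f(\omega-1)$ colours, then recurse on each component of $G-N[v_1]$ using the dominating-path structure, reusing the palette of $N(v_1)$ on $v_1$ itself.

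\emph{Main obstacle.} The hard part is exactly this off-by-one: showing that only $k-2$ layers need distinct palettes rather than $k-1$. I would try to get it by observing the following. Layer $A_1=N(v_1)$ and the set $\{v_1\}\cup$ (vertices far from the path) can share colours, since $v_1$ is anticomplete to everything outside $N[v_1]$. In addition, the last layer before the path is forced to terminate must be independent of $C$.

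Since this theorem is quoted from \cite{GravierHoangMaffray2003}, in the paper we would simply invoke it rather than reprove it.
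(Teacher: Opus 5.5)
The paper does not prove this statement; it quotes it from Gravier, Ho\`ang and Maffray. Your closing sentence therefore matches how the paper treats it. Read as a proof, though, your proposal has a genuine gap, and it sits exactly at the step you call the main obstacle.

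\textbf{The gap.} As you set it up, the path has up to $k-1$ vertices, and each vertex contributes a layer $A_j$ coloured with $f(\omega-1)$ colours. This only reproduces Gy\'arf\'as's count of $k-1$ layers. In particular, your base case ``by the same growing-path argument with independent levels'' gives $k-1$ colours for triangle-free graphs, not $k-2$. For $k=4$ it does not even show that triangle-free $P_4$-free graphs are bipartite. The devices you propose for saving a factor do not work:
\begin{itemize}
\item $v_1$ is complete to $N(v_1)$, so it can never reuse the palette of $N(v_1)$.
\item The claim that $A_j$ and $A_{j+2}$ are anticomplete is unjustified.
\item A single path following one high-chromatic component $C_j$ does not partition $V(G)$. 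Vertices in the other components of $G-N[v_1]$, and deeper, lie in no $A_j$; ``can be absorbed'' hides this.
\end{itemize}
So your ``core statement'' is not a precise claim, and nothing in the proposal produces the $k-2$ parts.

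\textbf{The missing idea.} Prove a rooted statement in which the root is never paid for. Let $f(w)$ be the maximum of $\chi$ over $P_k$-free graphs with clique number at most $w$.

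Claim: if $G$ is connected, $u\in V(G)$, no induced $P_t$ of $G$ has $u$ as an end, and $\chi(G[N(x)])\le d$ for every $x$, then $\chi(G-u)\le (t-2)d$.

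The proof is by induction on $t$.
\begin{itemize}
\item For $t=2$ we have $G=\{u\}$.
\item For $t\ge 3$, colour $N(u)$ from a palette of $d$ colours. Every component $C$ of $G-N[u]$ has a neighbour $u_C\in N(u)$. The graph $G[C\cup\{u_C\}]$ has no induced $P_{t-1}$ ending at $u_C$; otherwise prepend $u$, which is anticomplete to $C$. By induction $\chi(C)\le (t-3)d$.
\item Distinct components are anticomplete, so they can all share one second palette, disjoint from the first.
\end{itemize}

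Now take a connected $P_k$-free $G$ and any vertex $v$. The claim with $t=k$ and $d=f(\omega-1)$ gives $\chi(G-v)\le (k-2)d$. The vertex $v$ can take any colour of the second palette, because that palette is used only on $G-N[v]$. It is nonempty because $k\ge 4$ and $d\ge 1$. Hence $f(\omega)\le (k-2)f(\omega-1)$, and starting from $f(1)=1$ this gives $(k-2)^{\omega-1}$ with no separate $\omega=2$ case.

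Grouping colours by palette recovers your $k-2$ levels. The off-by-one you were looking for comes from two places: the bottom level of the recursion costs nothing, since each recursive root is already coloured inside its parent's neighbourhood, and the top root is absorbed into a lower palette.
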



This means that every $\{P_6, K_3\}$-free graph is 4-colorable, and every $\{P_6, K_4\}$-free graph is 16-colorable.
A \textit{hole} is an induced cycle of length at least \(4\), and an \textit{odd hole} is a hole of odd length.
Chudnovsky, Seymour, Robertson, and Thomas \cite{ChudnovskySeymourRobertsonThomas2010} showed that every $\{\text{odd hole}, K_4\}$-free graph is 4-colorable.
In particular, every $\{P_6, C_5, K_4\}$-free graph is 4-colorable.
In \cite{KarthickMishra2018}, Karthick and Mishra proved that $\chi(G) \le 2\omega(G) + 5$ if $G$ is $\{P_6, \text{diamond}\}$-free, and $\chi(G) \le 6$ if $G$ is additionally $K_4$-free.
Cameron, Huang, and Merkel \cite{CameronHuangMerkel2021} improved these results and answered a question raised in \cite{KarthickMishra2018}.

Very recently, Hong and Xu \cite{HongXu2025} proved the following theorem.

\begin{theorem}[\cite{HongXu2025}]\label{thm:hongxu}
	Every $\{P_6, \text{dart}, K_4\}$-free graph is 7-colorable.
\end{theorem}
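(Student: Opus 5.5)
We would prove Theorem~\ref{thm:hongxu} by taking a counterexample $G$ with the minimum number of vertices. Thus $G$ is $\{P_6,\text{dart},K_4\}$-free with $\chi(G)\ge 8$, and every proper induced subgraph is 7-colorable. Minimality gives the standard consequences. $G$ is connected, has no clique cutset, and has no pair of comparable vertices $u,v$ with $N(u)\subseteq N(v)$, since we could delete $u$ and give it $v$'s color. Also every vertex has degree at least 7.

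By the Chudnovsky--Robertson--Seymour--Thomas theorem cited above, a $\{P_6,C_5,K_4\}$-free graph is 4-colorable. Hence $G$ contains an induced $C_5$, say $C=v_1v_2v_3v_4v_5$. We will then partition $V(G)\setminus C$ according to the neighbourhood on $C$, with indices mod 5:
\begin{itemize}
\item $A_i$: vertices adjacent exactly to $v_{i-1},v_{i+1}$;
\item $B_i$: vertices adjacent exactly to $v_{i-1},v_i,v_{i+1}$;
\item $Y_i$: vertices adjacent exactly to $v_i$;
\item $D_i$: vertices adjacent exactly to $v_i,v_{i+1}$;
\item $Z$: vertices with no neighbour on $C$;
\item further classes for vertices with four or five neighbours on $C$.
\end{itemize}
$K_4$-freeness rules out some of these classes (for instance a vertex complete to $C$ is fine, but then its neighbourhood is triangle-free), and $P_6$-freeness constrains others. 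The dart constraint is the new ingredient compared with diamond-free graphs. A diamond is allowed, but for a diamond with degree-3 vertices $a,b$, every neighbour of $a$ outside the diamond must also be adjacent to $b$ or to one of the degree-2 vertices. In particular, a vertex lying in a diamond as a degree-3 vertex has a tightly controlled neighbourhood. We will record the adjacency rules between the classes, such as cliques versus stable sets and complete versus anticomplete, as a sequence of claims. Each claim is proved by exhibiting a $P_6$, dart or $K_4$. A key early claim is that $Z$ is anticomplete to most classes, so that $P_6$-freeness forces $N(Z)$ into the classes $Y_i$ and $A_i$.

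The coloring then proceeds by giving each part few colors. Each $A_i\cup\{v_i\}$ is triangle-free-like: $K_4$-freeness forces $N(v_{i-1})\cap N(v_{i+1})$ to be triangle-free, and the $P_6$-free restriction makes it bipartite or 3-colorable. We will try to color $C\cup\bigcup A_i\cup\bigcup B_i$ with about 4 colors, using the $C_5$-structure and the fact that $B_i$ is a stable set (dart/diamond considerations). The remaining classes, together with $Z$, should then receive 3 more colors. $Z$ is handled by induction or by noting that each component of $Z$ is attached through a restricted set.

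We expect the main obstacle to be diamonds. These cause diamond-free arguments like Karthick--Mishra's to fail, particularly within the classes $B_i$ and the sets adjacent to four vertices of $C$, where diamonds may occur. We would first try to show that in a minimal counterexample every diamond forces a clique cutset or a comparable pair, so that $G$ is essentially diamond-free near $C$. If that fails, we would split on whether $G$ contains an induced $C_5$ none of whose vertices lies in a diamond, and treat the diamond-heavy case by a direct structural argument.
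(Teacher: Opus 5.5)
\textbf{Verdict: there is a genuine gap.} Your proposal is an outline rather than a proof. Every step that carries weight is phrased as an intention: ``we will record the adjacency rules'', ``we will try to color $C\cup\bigcup A_i\cup\bigcup B_i$ with about 4 colors'', the remaining classes ``should then receive 3 more colors'', and $Z$ is handled ``by induction or'' otherwise. None of this is derived, so no color budget is ever verified.

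The specific claims you do make are not reliable:
\begin{itemize}
\item A vertex $y\in Y_i$ can never have a neighbour $z\in Z$, since $z-y-v_i-v_{i+1}-v_{i+2}-v_{i+3}$ would be an induced $P_6$. The path $z-d-v_i-v_{i-1}-v_{i-2}-v_{i-3}$ does the same for $d\in D_i$.
\item $B_i$ and the four- and five-neighbour classes are anticomplete to $Z$ by dart-freeness. For example, a neighbour $z\in Z$ of $x\in B_i$ would be a pendant at the degree-3 vertex $x$ of the diamond $\{x,v_{i-1},v_i,v_{i+1}\}$.
\item So $N(Z)$ actually lies in the $A_i$ together with the vertices whose neighbourhood on $C$ is $\{v_i,v_{i+1},v_{i+3}\}$. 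Your partition omits that class entirely.
\item $P_6$-freeness does not make a triangle-free set $3$-colorable: the Gr\"otzsch graph is $\{P_6,C_3\}$-free and $4$-chromatic.
\end{itemize}

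The decisive gap is the diamond case. You rightly flag it as the main obstacle, but you leave it to a hoped-for reduction (every diamond yields a clique cutset or a comparable pair) with no mechanism. The missing idea is to reverse this and use an induced diamond, not a $C_5$, as the anchor.

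The paper does not reprove this statement separately; it follows from Theorem~\ref{thm:main}. That argument, which refines Hong and Xu's, runs as follows:
\begin{itemize}
\item The diamond-free case is immediate from Theorem~\ref{thm:Cameron}, since $\chi\le\omega+3\le 6$.
\item Otherwise one partitions around an induced diamond $H$. Dart-freeness is exactly a condition on vertices attached to a diamond: a vertex whose only neighbour in $H$ is a degree-3 vertex of $H$creates a dart.
\item Together with $K_4$- and $P_6$-freeness and the absence of clique cutsets, this gives $A_1=A_3=C_1=C_3=D=\emptyset$, $|B_{i(i+1)}|\le 1$, $B_{13}$ stable, components of size at most $2$ in $A_2,A_4,B_{24}$, and $\chi(Z)\le 3$ (Lemmas~\ref{lem:3-colorable for Z}--\ref{lem:HXres}).
\item Lemma~\ref{lem:HX}(5) handles $C_2\cup C_4\neq\emptyset$. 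A degree count at $u_1$ from criticality then forces $B_{13}\neq\emptyset$. This confines $V(G)$ to $V(H)\cup A_2\cup A_4\cup B_{13}\cup B_{24}\cup Z$.
\item On that set an explicit coloring can be checked: four colors on $V(H)\cup A_2\cup B_{13}\cup Z$ (Lemma~\ref{prop:W4}) and two on $A_4\cup B_{24}$.
\end{itemize}
Without this diamond-anchored structure, or a fully worked-out substitute around your $C_5$, the proposal does not establish the bound of $7$.
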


They asked whether the bound can be improved to 6, and noted that the optimal bound, if lower, would be 5. 
Their computer search showed that no 6-critical graph of order at most 21 exists. 
We prove that 6 is in fact the optimal bound, thereby completely resolving their question.

\begin{theorem}\label{thm:main}
	Every $\{P_6, \text{dart}, K_4\}$-free graph is 6-colorable. Moreover, this bound is tight.
\end{theorem}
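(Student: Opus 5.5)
The plan has two parts: the upper bound $\chi(G)\le 6$ and a tightness example.

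For tightness, a diamond contains no dart, so every $\{P_6,\text{diamond},K_4\}$-free graph is $\{P_6,\text{dart},K_4\}$-free. It therefore suffices to exhibit a $\{P_6,\text{diamond},K_4\}$-free graph with chromatic number $6$. I expect such an example to be available from the work on that class (Karthick and Mishra \cite{KarthickMishra2018}; Cameron, Huang and Merkel \cite{CameronHuangMerkel2021}, whose bound $\omega+3$ gives exactly $6$ at $\omega=3$). If not, I would construct one, for instance a suitable Cayley graph on at least $22$ vertices; Hong and Xu's computer search excludes smaller ones. Since $P_6$-freeness, diamond-freeness and $K_4$-freeness of a fixed graph are finite checks, this part is a verification rather than a proof.

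For the upper bound, let $G$ be a minimal counterexample: a $\{P_6,\text{dart},K_4\}$-free graph with $\chi(G)=7$ all of whose proper induced subgraphs are $6$-colorable. Standard arguments give the following properties of $G$.
\begin{itemize}
\item $G$ is connected and has no clique cutset, since colorings glue along cliques.
\item Every vertex has degree at least $6$.
\item There is no pair of vertices $u,v$ with $N(u)\subseteq N(v)$, since otherwise $u$ could take $v$'s color.
\end{itemize}
By Karthick and Mishra, $G$ must contain a diamond; say $u_1u_3$ is its central edge. Let $S$ be the set of common neighbours of $u_1$ and $u_3$. By $K_4$-freeness $S$ is stable, and $|S|\ge2$. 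The key consequence of dart-freeness is local: if $x$ is adjacent to $u_1$ but not $u_3$, then for any $s,s'\in S$ the diamond $u_1,u_3,s,s'$ and the pendant $x$ would form a dart unless $x$ sees $s$ or $s'$. So $x$ misses at most one vertex of $S$, and symmetrically for neighbours of $u_3$.

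Next I will combine this with $K_4$-freeness. If $x$ sees $s,s'\in S$ and $u_1$, then $\{x,u_1,s,s'\}$ induces a diamond with central edge $xu_1$, provided $ss'$ is a non-edge, which holds since $S$ is stable. This forces a rigid structure: the "diamond cluster" around $u_1u_3$ should consist of a small number of stable sets that are pairwise complete or anticomplete. I will define $A=\{u_1\}\cup\{$vertices playing the same role$\}$, $B$ similarly for $u_3$, and $S$, and show that $G[A\cup B\cup S]$ is a blowup of a small graph with bounded parts. Then I will use $P_6$-freeness to control how the rest $R=V(G)\setminus (A\cup B\cup S)$ attaches. Each vertex of $R$ with a neighbour in the cluster should be complete or anticomplete to most parts; otherwise a $P_6$ through $R$, the cluster and a further neighbour arises, or a dart appears.

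The main obstacle is the final colouring step. I must produce a $6$-colouring of $G$ from $6$-colourings of $G-A$ or $G-S$ (available by minimality), extending over the removed stable parts. I would first try to show that some removed stable set $S$ has every vertex of degree at most $5$ into the remainder after the structure is pinned down, which contradicts the minimum degree $6$. Failing that, I would find two nonadjacent vertices with nested neighbourhoods, or a clique cutset, each contradicting minimality. The $P_6$-free analysis of $R$ will split into cases according to whether $R$ contains an induced $C_5$. Without a $C_5$, the graph of the relevant part is $\{\text{odd hole},K_4\}$-free, hence $4$-colorable by Chudnovsky, Seymour, Robertson and Thomas, leaving two spare colours for the cluster. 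With a $C_5$, I would run the usual neighbourhood analysis of the $C_5$ in $P_6$-free graphs, which I expect to be the longest and most delicate part.
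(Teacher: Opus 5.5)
Your opening reductions (minimal counterexample, connected, no clique cutset, $\delta(G)\ge 6$, and an induced diamond via Karthick--Mishra) match the paper's. The tightness part is also fine in substance. The correct reason is that every dart contains an induced diamond, not that ``a diamond contains no dart''. The complement of the Schl\"afli graph, which Cameron--Huang--Merkel show attains $\omega+3$ with $\omega=3$, is exactly the $27$-vertex graph the paper exhibits.

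\textbf{The gap.} Everything after ``$G$ contains a diamond'' is a list of intentions rather than an argument, and this is where the whole difficulty of the theorem lies.
\begin{itemize}
\item You never prove that the cluster around $u_1u_3$ is a blowup of a small graph, nor that vertices of $R$ are complete or anticomplete to ``most parts''.
\item The endgame fails as stated. The cluster contains the triangle $u_1u_3s$, so it cannot be coloured with the ``two spare colours'' left after a $4$-colouring of $R$. Colours must be shared across the two parts, and coordinating that is precisely the hard part.
\item The $C_5$ case, which you yourself flag as the hardest, is left entirely open.
\item You give no reason why a low-degree vertex, a dominated pair, or a clique cutset must appear. In the paper's final configuration none does, and one has to actually construct a colouring.
\end{itemize}

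\textbf{The missing idea.} What you lack is a concrete decomposition together with a coordinated colouring.
\begin{itemize}
\item The paper partitions $V(G)\setminus V(H)$ by neighbourhoods in the diamond $H$ and imports Hong--Xu's structure. This gives $A_1=A_3=C_1=C_3=D=\emptyset$, $|B_{i(i+1)}|\le 1$, $B_{13}$ stable, components of $A_2,A_4,B_{24}$ cliques of size at most $2$, $\chi(Z)\le 3$, and the case $C_2\cup C_4\neq\emptyset$ already handled.
\item A degree count at $u_1$ then forces $B_{13}\neq\emptyset$, so $V(G)=V(H)\cup A_2\cup A_4\cup B_{13}\cup B_{24}\cup Z$.
\item Colours are split $4+2$. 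The set $A_4\cup B_{24}$ gets colours $5,6$ regardless of its edges to the rest. The set $W=V(H)\cup A_2\cup B_{13}\cup Z$ is $4$-coloured with $u_2$, $u_4$ and all of $B_{13}$ sharing colour $2$, which forces $A_2$ into colours $\{1,3\}$.
\item The obstruction is the ``bad'' $K_2$-components of $G[Z_1]$, those with both ends seeing $B_{13}$. Their $A_2$-neighbourhoods must be monochromatic.
\item The paper encodes this in an auxiliary graph on $A_2$. Using $P_6$-, dart- and $K_4$-freeness, it proves that no edge of $G[A_2]$ lies inside one of its components and that the quotient is bipartite.
\end{itemize}
Nothing in your plan plays the role of this argument, so the upper bound $\chi(G)\le 6$ remains unproved.
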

	
	We note that Theorem~\ref{thm:main} also strengthens a result of Karthick and Mishra \cite{KarthickMishra2018}, who proved that every \(\{P_6, \text{diamond}, K_4\}\)-free graph is 6-colorable, since the latter class is properly contained in the class of \(\{P_6, \text{dart}, K_4\}\)-free graphs.
	
	The main technical contribution is contained in Lemmas~\ref{lem:bowtie}--\ref{lem:quotient-bipartite} and Proposition~\ref{prop:W4}.
	
	The rest of the paper is organized as follows.
	In Section~\ref{part2}, we establish the structural tools needed for the proof, including the auxiliary graph construction that is the main novelty of our approach.
	In Section~\ref{part3}, we prove Theorem~\ref{thm:main}.
	
	We close this section with some notation and terminology.
For a graph \(G\), let \(\delta(G)\) denote its minimum degree.
For two disjoint vertex sets \(X,Y\subseteq V(G)\), we say that \(X\) is \textit{complete} to \(Y\) if every vertex of \(X\) is adjacent to every vertex of \(Y\), and \textit{anti-complete} to \(Y\) if no vertex of \(X\) has a neighbor in \(Y\).
For any \(S \subseteq V(G)\), \(G[S]\) denotes the subgraph of \(G\) induced by \(S\), and \(G\setminus S\) denotes the subgraph of \(G\) induced by \(V(G)\setminus S\).
A vertex subset \(K \subseteq V(G)\) is a \textit{clique cutset} if \(K\) is a clique and \(G\setminus K\) has more components than \(G\).
For a vertex \(x \in V(G)\) and a set \(S \subseteq V(G)\), let \(N_S(x)\) denote the set of neighbors of \(x\) in \(S\), i.e., \(N_S(x)=N(x)\cap S\).
For \(v \in V(G)\), \(N_G(v)\) denotes the neighborhood of \(v\), and \(d_G(v)=|N_G(v)|\) is the degree of \(v\) in \(G\).
When the graph \(G\) is clear from the context, we omit the subscript.

\section{Tools}\label{part2}

In this section, we collect the necessary tools for the proof of Theorem~\ref{thm:main}. 
We use two known results.
The first one is due to Cameron, Huang and Merkel:

\begin{theorem}[\cite{CameronHuangMerkel2021}]\label{thm:Cameron}
	Let $G$ be a $\{P_6, \text{diamond}\}$-free graph. Then \(\chi(G) \le \omega(G) + 3\).
\end{theorem}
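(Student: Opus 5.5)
Since Theorem~\ref{thm:Cameron} is quoted from \cite{CameronHuangMerkel2021}, here is only a sketch of how I would reprove it. I would take a minimal counterexample $G$, a $\{P_6,\text{diamond}\}$-free graph with $\chi(G)\ge \omega(G)+4$ whose proper induced subgraphs all satisfy the bound, and collect the standard reductions:
\begin{itemize}
\item $G$ is connected and has no clique cutset, since colorings of the pieces can be glued along a clique.
\item $\delta(G)\ge \omega(G)+3$, since a vertex of smaller degree can be deleted and colored last.
\item No vertex $u$ has $N(u)\subseteq N(v)$ for some $v\ne u$ (non-adjacent), since $u$ could take $v$'s color.
\end{itemize}

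Diamond-freeness gives the basic local fact: for every vertex $v$, the graph $G[N(v)]$ is $P_3$-free, hence a disjoint union of cliques. Each such clique has size at most $\omega-1$, and any two of these cliques are anticomplete.

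Next I would reduce to the case where $G$ contains an induced $C_5$. If $G$ is perfect we are done. Otherwise, by the Strong Perfect Graph Theorem, $G$ contains an odd hole or an odd antihole.
\begin{itemize}
\item Since $G$ is $P_6$-free, holes of length at least $7$ are excluded, so any odd hole is a $C_5$.
\item Odd antiholes $\overline{C_{2k+1}}$ with $k\ge3$ contain a diamond, and $\overline{C_5}=C_5$.
\end{itemize}
So $G$ contains an induced $C_5$ with vertices $v_1,\dots,v_5$. I would choose it to be extremal, for instance a maximal blowup $Q_1,\dots,Q_5$ of $C_5$ by cliques; diamond-freeness forces these blowups to be quite thin. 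I would then partition $V(G)\setminus\bigcup Q_i$ according to the neighbourhood pattern on the $C_5$:
\begin{itemize}
\item vertices anticomplete to the $C_5$;
\item vertices with one neighbour, two consecutive neighbours, or two non-consecutive neighbours;
\item vertices with three neighbours $v_{i-1},v_i,v_{i+1}$ (the other three-neighbour patterns create a diamond or a contradiction);
\item larger attachments, which are mostly excluded because they would create a diamond or a $K_4$-type obstruction.
\end{itemize}
$P_6$-freeness gives the usual consequences:
\begin{itemize}
\item a vertex anticomplete to the $C_5$ cannot have a neighbour with exactly one, or two consecutive, neighbours on the cycle, since that produces a $P_6$;
\item the various classes are complete or anticomplete to one another in prescribed ways;
\item each class induces a graph whose neighbourhood structure is a disjoint union of cliques, so each class is perfect, or close to perfect with small $\omega$.
\end{itemize}

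The coloring step would assign disjoint palettes: $\omega$ colors shared cyclically among the $Q_i$ and the classes that attach to them as "near-clones", plus three extra colors for the residual classes. These residual classes are the vertices with two non-consecutive neighbours and the anticomplete part. The key lemma I would aim for is that the anticomplete part, together with its attachments, is dominated in a way that allows it to be $3$-colored, or that it reuses colors from the blowup.

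The main obstacle is this last gluing step. One must show that the union of all the classes needs only $\omega+3$ colors in total, rather than additively many. I expect this to require a careful case analysis on how the sets with two non-consecutive neighbours interact across different indices $i$, using diamond-freeness to show that they are pairwise anticomplete or induce matchings. If a direct palette split fails, I would first try to exploit the no-clique-cutset and no-dominated-vertex conditions to prove that the anticomplete part is empty.
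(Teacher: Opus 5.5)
The paper does not prove this statement. It quotes it from Cameron, Huang and Merkel~\cite{CameronHuangMerkel2021} and uses it only as a black box, to dispose of the diamond-free case in the proof of Theorem~\ref{thm:main}. Judged on its own, your proposal has a genuine gap. What you actually establish is correct but routine: the minimal-counterexample reductions, the fact that each $N(v)$ is a disjoint union of cliques, and the reduction to the case that $G$ contains an induced $C_5$ (via the Strong Perfect Graph Theorem, $P_6$-freeness and diamond-freeness). None of this bounds $\chi$. The whole content of the theorem is the step you leave open: an explicit $(\omega+3)$-colouring built from the partition around the $C_5$. You describe the palette split you would like and say you ``expect'' a case analysis to deliver it. But you never state, let alone prove, a lemma showing that the residual classes need only three colours, or that the remaining classes can share $\omega$ colours with the cycle.

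The scaffolding for that case analysis is also partly wrong.
\begin{itemize}
\item A vertex $x$ adjacent to three consecutive cycle vertices $v_{i-1},v_i,v_{i+1}$ creates the diamond $\{x,v_{i-1},v_i,v_{i+1}\}$: the triangles $xv_{i-1}v_i$ and $xv_iv_{i+1}$ share $xv_i$, and $v_{i-1}v_{i+1}\notin E(G)$. So that class is empty. The three-neighbour type that survives is $\{v_i,v_{i+1},v_{i+3}\}$, a cycle edge plus the opposite vertex. Four or five neighbours are always excluded, not ``mostly''.
\item In a diamond-free graph every clique blowup of $C_5$ has all bags of size one: two vertices of $Q_i$ together with one vertex of each of $Q_{i-1},Q_{i+1}$ form a diamond. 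So the ``maximal blowup'' and the ``$\omega$ colours shared cyclically among the $Q_i$'' carry no information.
\item ``All neighbourhoods are disjoint unions of cliques'' does not imply that a class is perfect; $C_5$ itself has this property.
\item Your fallback of proving the anticomplete part empty from the no-clique-cutset and no-dominated-vertex conditions cannot work. The complement of the Schl\"afli graph, the strongly regular graph with parameters $(27,10,1,5)$, is $\{P_6,\text{diamond}\}$-free, since it is the collinearity graph of a generalized quadrangle. It has no clique cutset, no dominated vertex ($\mu=5<10$), and minimum degree $10\ge\omega+3$. Yet a count using $\lambda=1$ and $\mu=5$ shows that every induced $C_5$ in it has at least two vertices with no neighbour on the cycle.
\end{itemize}
So any emptiness statement would have to be derived from $\chi\ge\omega+4$ itself, which is exactly the missing argument.
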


Second, we shall use the structural reductions from Hong and Xu \cite{HongXu2025}, 
which were used to prove that every $\{P_6,\text{dart},K_4\}$-free graph is 7-colorable.
For completeness, we state precisely the parts needed below.

Let $G$ be a connected $\{P_6,\text{dart},K_4\}$-free graph with no clique cutset, and let $H$ be an induced diamond with vertex set
\[
V(H)=\{u_1,u_2,u_3,u_4\}
\]
and edge set
\[
E(H)=\{u_1u_2,u_2u_3,u_3u_4,u_4u_1,u_1u_3\}.
\]
Thus $u_2u_4\notin E(G)$, and $u_1,u_3$ are the two degree-three vertices of the diamond (see \autoref{fig:diamond}).

For $x\in V(G)\setminus V(H)$, partition vertices according to their neighborhoods in $H$ as follows:
\begin{align*}
	A_i&=\{x:N_H(x)=\{u_i\}\},\quad i\in\{1,2,3,4\},\\
	B_{ij}&=\{x:N_H(x)=\{u_i,u_j\}\},
\end{align*}
where $ij\in\{12,23,34,41,13,24\}$,
\begin{align*}
	C_i&=\{x:N_H(x)=\{u_i,u_{i+1},u_{i+2}\}\},\quad i\in\{1,2,3,4\},
\end{align*}
with indices taken modulo $4$,
\[
D=\{x:N_H(x)=V(H)\},\qquad Z=\{x:N_H(x)=\emptyset\}.
\]


The following lemma is a key result of \cite{HongXu2025}. 
We give a new vision of proof for this lemma to ensure completeness of our result. 

\begin{lemma}[\cite{HongXu2025}]\label{lem:3-colorable for Z}
    Let $G$ and $H$ be as above. 
    Then $\chi(Z)\leq 3$. 
\end{lemma}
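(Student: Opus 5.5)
Throughout, $\chi(Z)$ means $\chi(G[Z])$. The plan is to show that every component of $G[Z]$ is $\{P_6,\text{diamond},K_4\}$-free with additional structure forcing 3-colorability; the natural route is to use that $G$ has no clique cutset, so each component of $G[Z]$ attaches to $N(H)$ in a rich way. The key steps are as follows.

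\textbf{Step 1: $G[Z]$ is diamond-free.} Suppose $Q=\{z_1,z_2,z_3,z_4\}\subseteq Z$ induces a diamond with $z_1,z_3$ of degree three. Since $G$ is connected, take a shortest path from $H$ to $Q$. Its last vertex $y$ outside $Z$ (or a vertex of $Z$ on the path) has a neighbor in $Q$. We then analyse how a vertex $y$ can attach to a diamond in a dart-free, $K_4$-free way.
\begin{itemize}
\item If $y$ is adjacent to $z_1$ but not to $z_3$, then $z_3$ must also be handled. The options are: $y$ is adjacent to exactly the set $\{z_1\}$, giving a dart $\{y,z_1,z_2,z_3,z_4\}$; or $y$ sees $z_2$ or $z_4$, giving either a $K_4$ or a further dart.
\item By symmetric case checks, the only admissible attachments of a vertex to a diamond are $N_Q(y)=\{z_2\}$, $\{z_4\}$, $\{z_2,z_4\}$, or $\{z_1,z_3\}$ plus one of $z_2,z_4$ (the latter is a $C_i$-type).
\end{itemize}
Combining the attachments of vertices of $H$ to $Q$ (empty) with those of the path vertices, we get an induced path through $u$'s, the connecting vertices and $Q$. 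Extending it by $u_2u_1u_4$-type pieces of $H$ and by $z_2z_1z_4$ of $Q$ yields an induced $P_6$, a contradiction. The case where the path has length at least $2$ gives a $P_6$ even more directly.

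\textbf{Step 2: apply Theorem~\ref{thm:Cameron}.} Once $G[Z]$ is diamond-free and $P_6$-free with $\omega\le 3$, Theorem~\ref{thm:Cameron} gives $\chi(Z)\le 6$. This is not enough, so we need more structure.

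\textbf{Step 3: $G[Z]$ is triangle-free, or each component is small.} Here I would exploit the $P_6$ obtained by combining a path $u_2u_1u_4$ (induced, since $u_2u_4\notin E$) with a neighbor $y\in N(u_1)$ or $N(u_4)$ that has a neighbor in $Z$ and its continuation into $Z$. Concretely, let $y$ be a vertex in $N(H)$ with a neighbor $z\in Z$, and choose $y$ with $N_H(y)$ missing one of $u_2,u_4$ (possible unless $y\in D\cup B_{24}\cup C_i$, and those are handled via dart/$K_4$). Then for a vertex $z'\in Z$ adjacent to $z$ but not to $y$, the path $u_2u_1u_4\,y\,z\,z'$ (or a variant) is an induced $P_6$. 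This forces every vertex of $Z$ within distance $2$ of such a $y$ to be adjacent to $y$, so components of $G[Z]$ are dominated by attachment vertices. Each $N(y)\cap Z$ is triangle-free (because $K_4$ is excluded) and, by dart-freeness, $N(y)\cap Z$ is $P_3$-free, i.e.\ a disjoint union of cliques of size at most $2$.

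\textbf{Step 4: conclude $\chi(Z)\le 3$.} Use the absence of a clique cutset to show that each component $Z'$ of $G[Z]$ is contained in $N(y)$ for a single $y$, or in $N(y)\cup N(y')$ with $yy'\in E$. Then $Z'$ is the union of two graphs that are each matchings plus isolated vertices, with triangle-free constraints. A direct coloring gives $3$ colors: color $N(y)\cap Z'$ with $2$ colors, and the rest is an independent set by $K_4$-freeness of $\{y,y'\}\cup$ an edge.

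The main obstacle is Step~3/4, namely getting domination of components of $Z$ by one or two attachment vertices. If this fails, I would instead aim to prove directly that $G[Z]$ is $\{P_5,\text{diamond}\}$-free and triangle-poor, and use the $\omega+1$ bound of Karthick and Maffray for $\{P_5,\text{diamond}\}$-free graphs together with a triangle-free argument to reach $3$.
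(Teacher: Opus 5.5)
Your proposal has a genuine gap, and it sits where you suspected, in Steps~3--4.

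\textbf{Step 3: the domination argument does not cover the hard attachments.} The induced path $u_2u_1u_4\,y\,z\,z'$ requires $y$ to be the end of an induced $P_4$ in $G[V(H)\cup\{y\}]$. That holds for $y\in A_2\cup A_4\cup B_{12}\cup B_{23}\cup B_{34}\cup B_{41}$. It fails for $y\in B_{13}$: then $yu_1\in E(G)$, and the only induced $P_3$'s of $H$ are $u_2u_1u_4$ and $u_2u_3u_4$, whose middle vertices are neighbors of $y$. It fails likewise for $y\in B_{24}\cup C_2\cup C_4$, and none of these types is excluded by dart- or $K_4$-freeness. In fact, your argument shows that any component of $Z$ other than a $K_1$ or $K_2$ can attach to $N(H)$ \emph{only} through such vertices. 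So the domination you want fails exactly in the case that matters.

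For a general attachment vertex $r$ you only have the shorter path $u_ju_i\,r\,v\,x\,x'$ (with $u_iu_j\in E(H)$, $ru_i\in E(G)$, $ru_j\notin E(G)$). This gives the paper's weaker structure:
\begin{itemize}
\item each $v\in N(r)\cap Z_0$ is complete or anticomplete to each component of $Z_0\setminus N(r)$, so $Z_0$ lies within distance two of $r$;
\item the components of both $N(r)\cap Z_0$ and $Z_0\setminus N(r)$ are $K_1$'s or $K_2$'s.
\end{itemize}
Your dart observation gives the second item only for $N(r)\cap Z_0$; for $Z_0\setminus N(r)$ you need the complete vertex supplied by the first item.

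\textbf{Step 4: the containment is unproved and the coloring fails.} You assert without argument that a component lies in $N(y)\cup N(y')$ with $yy'\in E(G)$. The no-clique-cutset hypothesis does not give this. It gives two \emph{nonadjacent} attachment vertices $b,c$; otherwise $N(Z_0)$ would be a clique cutset. Nothing forces $Z_0\subseteq N(b)\cup N(c)$, and the paper's coloring explicitly handles $K_2$-components of $Z_0\setminus N(b)$ that $c$ does not see. Even granting your containment, the coloring does not work. We have $Z'\setminus N(y)\subseteq N(y')\setminus N(y)$, so an edge there together with $y,y'$ is not a $K_4$. That set is a disjoint union of $K_1$'s and $K_2$'s rather than a stable set, and the naive count is four colors.

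\textbf{The missing idea: coordinating the two sides.} Set $F=N(b)\cap Z_0$, $S=F\setminus N(c)$ and $T=F\cap N(c)$. Using the structure above for both $b$ and $c$, plus dart- and $K_4$-freeness, one proves two facts about every $K_2$-component $Q$ of $Z_0\setminus F$:
\begin{itemize}
\item $N(Q)\cap F\subseteq S$ if $c$ has a neighbor in $Q$, and $N(Q)\cap F\subseteq T$ otherwise;
\item an edge of $F$ lying inside $S$ or inside $T$ is anticomplete to $Q$.
\end{itemize}
This makes an explicit $3$-coloring possible:
\begin{itemize}
\item vertices of $S$ with no neighbor in $S$ get $1$, and vertices of $T$ with no neighbor in $T$ get $2$;
\item edges inside $S$ or inside $T$ get $\{1,2\}$;
\item single-vertex components of $Z_0\setminus F$ get $3$;
\item $K_2$-components of $Z_0\setminus F$ get $\{2,3\}$ or $\{1,3\}$, according to whether $c$ sees them.
\end{itemize}

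\textbf{Steps 1--2 and the fallback do not help.} The attachment list in Step~1 is wrong:
\begin{itemize}
\item ``$\{z_1,z_3\}$ plus one of $z_2,z_4$'' contains a $K_4$;
\item the admissible attachment $\{z_1,z_3\}$ is missing;
\item for $y\in B_{13}$ with $N_Q(y)=\{z_2,z_4\}$ there is no induced $P_6$ in $V(H)\cup\{y\}\cup Q$.
\end{itemize}
In any case, as you note, diamond-freeness only yields $\chi\le 6$. The fallback via the $\omega+1$ bound gives at most $4$, and $G[Z]$ has no reason to be triangle-free.
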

\begin{proof}
    Let $Z_0$ be a component of $Z$. 
    The following claim gives the structure of $Z_0$ 
    \begin{claim}\label{clm:structure of Z}
        Let $r$ be a vertex in $N(H)$ that has neighbors in $Z_0$. 
        Then 
        \begin{itemize}
            \item[$(1)$] every vertex in $N(r)\cap Z_0$ is either complete or anti-complete to each component of $Z_0\setminus N(r)$; 
            \item[$(2)$] each component of $N(r)\cap Z_0$ and $Z_0\setminus N(r)$ is either a $K_2$ or $K_1$. 
        \end{itemize}
    \end{claim}
    \begin{subproof}[Proof of Claim \ref{clm:structure of Z}]
        Suppose first that $r$ has a neighbor $v\in Z_0$ that has a neighbor and a non-neighbor in a component of $Z_0\setminus N(r)$. 
        So there exist $x,y\in Z_0\setminus N(r)$ with $rx,xy\in E(G)$ but $ry\notin E(G)$. 
        Since $D=\emptyset$, there is an index $i\in[4]$ such that $r$ is adjacent to $u_i$ but is not adjacent to $u_{i+1}$. 
        Then $u_{i-1}-u_i-r-v-x-y$ is an induced $P_6$, a contradiction. 
        This proves Claim \ref{clm:structure of Z} (1). 

        Note that since $Z_0$ is connected and by Claim \ref{clm:structure of Z} (1), for every component $L$ of $N(r)\cap Z_0$ and $Z_0\setminus N(r)$, there is a vertex $r'$ that is complete to $L$ ($r'=r$ if $L\subseteq N(r)\cap Z_0$). 
        Suppose that there is an induced $P_3=a-b-c$ in $L$. 
        Then $\{a,b,c,r'\}$ induces a diamond and hence, $\{a,b,c,r,r'\}$ induces a dart if $r'\neq r$ or $\{a,b,c,r,w\}$ induces a dart if $r'= r$, where $w\in V(H)$ is a neighbor of $r$. 
        So $L$ is $P_3$-free, which implies that $L$ is a clique. 
        Since $G$ has no $K_4$, $L$ is either a $K_2$ or a $K_1$. 
        This proves Claim \ref{clm:structure of Z} (2).
    \end{subproof}
    Since $G$ has no clique cutset, there exist two nonadjacent vertices $b,c\in N(H)$ such that $b,c$ have neighbors in $Z_0$. 
    Let $F=N(b)\cap Z_0$ ,$S=F\setminus N(c)$ and $T=F\cap N(c)$. 
    
    \begin{claim}\label{clm:if and only if of Q}
        Let $Q=xy$ be an edge in $Z_0\setminus N(b)$. 
        Then 
        \begin{itemize}
            \item[$(1)$] $N(Q)\cap F\subseteq S$ if $Q\cap N(c)\neq \emptyset$; 
            \item[$(2)$] $N(Q)\cap F\subseteq T$ if $Q\cap N(c)= \emptyset$. 
        \end{itemize}
    \end{claim}
    \begin{subproof}[Proof of Claim \ref{clm:if and only if of Q}]
        Suppose first that $Q\cap N(c)\neq \emptyset$ and there is a vertex $v\in T$ that is adjacent to $Q$.  
        By symmetry and Claim \ref{clm:structure of Z} (1) with $r=b$, we may assume that $xc\in E(G)$ and $v$ is complete to $Q$. 
        Since $G$ has no $K_4$, $cy\notin E(G)$ and hence $\{b,c,v,x,y\}$ induces a dart, a contradiction. 
        This proves Claim \ref{clm:if and only if of Q} (1). 

        We then suppose that $Q\cap N(c)= \emptyset$ and there is a vertex $v\in S$ that is adjacent to $Q$.  
        By Claim \ref{clm:structure of Z} (1) with $r=b$, $v$ is complete to $Q$. 
        So $\{v,x,y\}$ induces a $K_3$ in $Z_0\setminus N(c)$, which contradicts Claim \ref{clm:structure of Z} (2) with $r=c$. 
        This proves Claim \ref{clm:if and only if of Q} (2). 
    \end{subproof}
    \begin{claim}\label{clm:an edge anti to Q}
        Let $aa'$ be an edge of $F$. 
        Then $Q$ is anti-complete to $\{a,a'\}$ if $a,a'\in S$ or $a,a'\in T$. 
    \end{claim}
    \begin{subproof}[Proof of Claim \ref{clm:an edge anti to Q}]
        Suppose to contrary that $Q$ is not anti-complete to $\{a,a'\}$. 
        Since $G$ has no $K_4$ and by Claim \ref{clm:structure of Z} (1) with $r=b$, we may assume that $a$ is complete to $Q$ but $a'$ is anti-complete to $Q$. 
        Suppose first that $a,a'\in S$. 
        By Claim \ref{clm:if and only if of Q}, $Q\cap N(c)\neq \emptyset$, say $xc\in E(G)$, which contradicts Claim \ref{clm:structure of Z} (1) with $r=c$ since $x$ is adjacent to $a$ but is nonadjacent to $a'$. 
        So we may assume that $a,a'\in T$. 
        By Claim \ref{clm:if and only if of Q} again, $Q\cap N(c)= \emptyset$. 
        Then $\{a,a',b,c,x\}$ induces a dart, a contradiction. 
        This proves Claim \ref{clm:an edge anti to Q}.
    \end{subproof}
    We are now ready to give a 3-coloring of $Z_0$. 
    \begin{itemize}
        \item Color all isolated vertices of $S$ with $1$; color all isolated vertices of $T$ with $2$; color each $K_2$ of $S$ or $T$ with $1,2$; \item color all isolated vertices of $Z_0\setminus F$ with $3$; color each $K_2$ of $Z_0\setminus F$ with $2,3$ if $c$ has neighbors in this $K_2$; color each $K_2$ of $Z_0\setminus F$ with $1,3$ if $c$ has no neighbors in this $K_2$. 
    \end{itemize}
    By Claims \ref{clm:structure of Z}-\ref{clm:an edge anti to Q}, this is a 3-coloring of $Z_0$. 
    This completes the proof of Lemma \ref{lem:3-colorable for Z}.
\end{proof}

We shall use the following facts from \cite{HongXu2025}.
\begin{lemma}[\cite{HongXu2025}]\label{lem:HX}
	Let $G$ and $H$ be as above.  
    Then the following hold.
    \begin{itemize}
        \item[$(1)$] $A_1=A_3=C_1=C_3=D=\emptyset$.
        \item[$(2)$] Each component of $G[A_2]$, $G[A_4]$ and $G[B_{24}]$ is a clique of size at most $2$.
        \item[$(3)$] $B_{13}$ is a stable set, and $|B_{i(i+1)}|\leq 1$ for each $i\in\{1,2,3,4\}$.
        \item[$(4)$] Either $B_{13}=\emptyset$, or $B_{12}=B_{23}=B_{34}=B_{41}=\emptyset$.
        \item[$(5)$] If $C_2\neq\emptyset$ or $C_4\neq\emptyset$, then $\chi(G)\leq 6$ (contained in Case 1 of \cite{HongXu2025} with $\chi(Z)\leq 3$ by Lemma \ref{lem:3-colorable for Z}). 
    \end{itemize}
\end{lemma}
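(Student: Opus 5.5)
The plan is to verify items (1)--(4) directly from the forbidden subgraphs. Each item comes down to exhibiting either a $K_4$ or an induced dart inside a small vertex set built from $H$ and at most three further vertices. For item (5) we rely on the argument of \cite{HongXu2025}. The recurring device is this: whenever we find an induced diamond whose degree-three vertex $w$ has a neighbour $p$ that is anti-complete to the other three diamond vertices, adding $p$ yields a dart.

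For (1), a vertex of $D$ or $C_1$ together with $\{u_1,u_2,u_3\}$ forms a $K_4$. Similarly, a vertex of $C_3$ together with $\{u_3,u_4,u_1\}$ forms a $K_4$. For $x\in A_1$, the set $V(H)\cup\{x\}$ induces a dart, since $x$ is a pendant neighbour of the degree-three vertex $u_1$; the case $x\in A_3$ is symmetric.

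For (2), suppose $a-b-c$ is an induced $P_3$ in $G[A_2]$ or in $G[B_{24}]$. Then $\{u_2,a,b,c\}$ is a diamond whose degree-three vertices are $u_2$ and $b$. The vertex $u_1$ is adjacent to $u_2$ and anti-complete to $\{a,b,c\}$, so adding it gives a dart; the case of $A_4$ is symmetric using $u_4$. Hence each component is a clique, and $K_4$-freeness together with $u_2$ (respectively $u_4$) bounds its size by $2$.

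For (3), an edge $xy$ in $B_{13}$ gives the $K_4$ $\{u_1,u_3,x,y\}$. Now take two vertices $x,y\in B_{12}$. If they are adjacent, $\{u_1,u_2,x,y\}$ is a $K_4$. If they are non-adjacent, $\{u_1,u_2,x,y\}$ is a diamond with degree-three vertices $u_1,u_2$, and $u_4$ is a pendant neighbour of $u_1$, giving a dart. The other sets $B_{i(i+1)}$ are handled by symmetry.

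For (4), take $x\in B_{13}$ and, by symmetry, $y\in B_{12}$. If $xy\notin E(G)$, then $\{u_1,u_3,x,u_4\}$ is a diamond with degree-three vertices $u_1,u_3$, and $y$ is a pendant neighbour of $u_1$, giving a dart. If $xy\in E(G)$, then $\{u_1,x,y,u_2\}$ is a diamond with degree-three vertices $u_1,y$, and $u_4$ is a pendant neighbour of $u_1$, giving a dart.

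Item (5) is the main obstacle, since it is a genuine colouring argument rather than a local check; here we follow Case~1 of \cite{HongXu2025}. Assume $C_2\neq\emptyset$ and let $c\in C_2$. First we analyse how the vertices of $N(H)$ attach to $c$, using $P_6$-, dart- and $K_4$-freeness. From this we partition $V(G)\setminus Z$ into three stable sets, or into a few pieces that together admit a $3$-colouring. We then colour $Z$ with three new colours using Lemma~\ref{lem:3-colorable for Z}. This replaces the weaker bound on $\chi(Z)$ that produced $7$ colours in \cite{HongXu2025}, and the total is at most $6$. The delicate point is checking that the $3$-colouring of $N[H]$ from \cite{HongXu2025} uses no colour on $Z$. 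That holds because the two palettes are disjoint, so only the bound on $\chi(Z)$ needs improving.
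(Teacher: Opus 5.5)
Your arguments for (1)--(4) are correct. Each case really does produce a $K_4$ or an induced dart exactly as you describe. The paper itself gives no argument for these items and only cites Hong and Xu, so your direct checks are a valid self-contained replacement.

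The gap is in (5). Your sketch needs a $3$-colouring of $V(G)\setminus Z=V(H)\cup N(H)$ whenever $C_2\neq\emptyset$, after which $Z$ gets three fresh colours. No such $3$-colouring exists in general. Add to $H$ three vertices $c,x,y$ with $N(c)=\{u_2,u_3,u_4,x,y\}$, $N(x)=\{u_2,c,y\}$ and $N(y)=\{u_4,c,x\}$. Then $c\in C_2$, $x\in A_2$, $y\in A_4$ and $Z=\emptyset$.

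This $7$-vertex graph is connected and has no clique cutset. It is also $\{P_6,\text{dart},K_4\}$-free:
\begin{itemize}
\item Any six of its vertices either contain $c$ together with at least four of its neighbours, or equal $V(H)\cup\{x,y\}$, where $u_1$ has degree $3$; so there is no induced $P_6$.
\item The edges lying in two triangles are $cu_2,cu_3,cu_4,cx,cy,u_2u_3,u_3u_4,u_1u_3$. In each of the eight resulting diamonds, every outside neighbour of a degree-three vertex also sees another vertex of the diamond; so there is no induced dart.
\end{itemize}
Yet $u_2u_3u_4yx$ is an induced $C_5$ complete to $c$. So $G[V(H)\cup N(H)]$ contains $W_5$ and needs four colours.

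Hence the scheme ``three colours on $N[H]$, three disjoint colours on $Z$'' cannot be what makes Case~1 work, and palette disjointness is not the delicate point. With only $\chi(Z)\le 3$ available, a $6$-colouring must let $Z$ reuse colours from $V(H)\cup N(H)$, for instance those of $V(H)$, which is anticomplete to $Z$.

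Your account of where the old bound $7$ came from also conflicts with the paper. Lemma~\ref{lem:3-colorable for Z} is itself attributed to Hong and Xu, so their Case~1 already had $\chi(Z)\le 3$. The improvement from $7$ to $6$ is made in Case~2.1, through the auxiliary graph $S$ and Lemmas~\ref{lem:bowtie}--\ref{prop:W4}. The paper handles (5) purely by citation: Case~1 of Hong and Xu combined with Lemma~\ref{lem:3-colorable for Z}. If you rely on that citation, state exactly that, rather than attributing to it a colouring of $N[H]$ that cannot exist.
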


In the case $C_2=C_4=\emptyset$ and $B_{13}\neq\emptyset$, we also need the following part of Case 2.1 of \cite{HongXu2025}.

\begin{lemma}[Case 2.1 of \cite{HongXu2025}]\label{lem:HXres}
	Assume that \(C_2=C_4=\emptyset, B_{13}\neq\emptyset\), and hence, by Lemma \ref{lem:HX} (4), $B_{12}=B_{23}=B_{34}=B_{41}=\emptyset$.
	Then $V(G)=V(H)\cup A_2\cup A_4\cup B_{13}\cup B_{24}\cup Z$.
	Moreover, 
    \begin{itemize}
        \item[$(1)$] $\chi(G[A_4\cup B_{24}])\leq 2$.
        \item[$(2)$] Let $Z_1$ be the union of the components of $G[Z]$ that have a neighbor in $A_2$, and let $Z_2=Z\setminus Z_1$.  Then $Z_2$ is anti-complete to $Z_1\cup A_2\cup V(H)$, and $\chi(G[Z_2])\leq 3$ (by Lemma \ref{lem:3-colorable for Z}).
        \item[$(3)$] If $Q$ is a component of $G[Z_1]$, then $G[Q]$ is $K_1$ or $K_2$, and $N_{A_2}(Q)$ is complete to $Q$.
    \end{itemize}
\end{lemma}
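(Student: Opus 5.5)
The vertex partition comes directly from Lemma~\ref{lem:HX}. By Lemma~\ref{lem:HX}~(1), $A_1=A_3=C_1=C_3=D=\emptyset$. By hypothesis $C_2=C_4=\emptyset$. Since $B_{13}\neq\emptyset$, Lemma~\ref{lem:HX}~(4) gives $B_{12}=B_{23}=B_{34}=B_{41}=\emptyset$. The only classes that remain are $A_2,A_4,B_{13},B_{24},Z$, which proves $V(G)=V(H)\cup A_2\cup A_4\cup B_{13}\cup B_{24}\cup Z$.

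For (1), I would use that every vertex of $A_4\cup B_{24}$ is adjacent to $u_4$, so $G[A_4\cup B_{24}]\subseteq G[N(u_4)]$, while $u_1$ is anti-complete to $A_4\cup B_{24}$. Suppose $a-b-c$ is an induced $P_3$ in $G[A_4\cup B_{24}]$. Then $\{u_4,a,b,c\}$ induces a diamond whose degree-three vertices are $u_4$ and $b$. Since $u_1$ is adjacent to $u_4$ and to none of $a,b,c$, the set $\{u_1,u_4,a,b,c\}$ induces a dart, which is a contradiction. Hence $G[A_4\cup B_{24}]$ is a disjoint union of cliques. None of these cliques has three vertices, since together with $u_4$ that would give a $K_4$. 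So every component has at most two vertices, and $\chi(G[A_4\cup B_{24}])\le 2$.

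For (2), $Z_1$ and $Z_2$ are unions of distinct components of $G[Z]$, so they are anti-complete to each other. By definition of $Z_1$, the set $Z_2$ has no neighbor in $A_2$, and every vertex of $Z$ has no neighbor in $V(H)$. Since $Z_2\subseteq Z$, Lemma~\ref{lem:3-colorable for Z} gives $\chi(G[Z_2])\le\chi(G[Z])\le 3$.

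For (3), which is the main step, I would mimic Claim~\ref{clm:structure of Z}. Let $r\in A_2$ have a neighbor in a component $Q$ of $G[Z_1]$. Then $u_4-u_1-u_2-r$ is an induced $P_4$, because $r$ is adjacent only to $u_2$ in $H$ and $u_2u_4\notin E(G)$. Suppose some $v\in N(r)\cap Q$ has a neighbor $x\in Q\setminus N(r)$. Then $u_4-u_1-u_2-r-v-x$ is an induced $P_6$, using that $Z$ is anti-complete to $V(H)$. So every neighbor in $Q$ of a vertex of $N(r)\cap Q$ also lies in $N(r)$, and by connectivity of $Q$ the vertex $r$ is complete to $Q$. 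This shows $N_{A_2}(Q)$ is complete to $Q$.

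Fix such an $r$. If $Q$ contained an induced $P_3$ $a-b-c$, then $\{r,a,b,c\}$ would be a diamond with degree-three vertices $r$ and $b$. Adding $u_2$, which is adjacent to $r$ and anti-complete to $Q$, gives a dart, which is a contradiction. Hence $Q$ is a clique. As $Q\cup\{r\}$ is $K_4$-free, $|Q|\le 2$, so $G[Q]$ is $K_1$ or $K_2$.

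The only real care needed in this part is checking that each claimed $P_6$ and dart is induced, which uses the exact neighborhoods defining $A_2$, $A_4$, $B_{24}$ and $Z$.
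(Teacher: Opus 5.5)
Your proof is correct and complete. The paper gives no argument of its own for this lemma. It imports the statement from Case~2.1 of Hong and Xu, and justifies only the bound $\chi(G[Z_2])\le 3$ in (2) by Lemma~\ref{lem:3-colorable for Z}, exactly as you do. So your proposal is a self-contained replacement for the citation rather than a competing route.

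Your two local arguments have some merit beyond self-containedness:
\begin{itemize}
\item In (1) you use only that, for the edge $u_1u_4$, the non-neighbours of $u_1$ inside $N(u_4)$ induce a graph that is $P_3$-free (by dart-freeness) and triangle-free (by $K_4$-freeness). This handles $A_4\cup B_{24}$ as a whole. Lemma~\ref{lem:HX}~(2) alone would not suffice, since it controls $G[A_4]$ and $G[B_{24}]$ only separately and says nothing about edges between them.
\item In (3) you use the induced $P_4$ $u_4-u_1-u_2-r$, which is one vertex longer than the prefix $u_{i-1}-u_i-r$ used in Claim~\ref{clm:structure of Z}. The $P_6$ obstruction then needs only a single edge $v-x$ with $v\in N(r)$ and $x\notin N(r)$, and so yields full completeness of $r$ to $Q$ at once. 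The dart argument of Claim~\ref{clm:structure of Z}~(2) then bounds $|Q|\le 2$.
\end{itemize}

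The only implicit step is that some $r\in A_2$ with a neighbour in $Q$ exists. This is immediate from the definition of $Z_1$.
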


\medskip

We shall assume the hypotheses of Lemma~\ref{lem:HXres} throughout the rest of this section.
Put
$$W:=V(H)\cup A_2\cup B_{13}\cup Z.$$
We prove that $G[W]$ is $4$-colorable. 

We shall use colors \(1,2,3,4\). Our intended coloring gives color \(1\) to \(u_1\), color \(3\) to \(u_3\), color \(2\) to both \(u_2\) and \(u_4\), and also color \(2\) to every vertex of \(B_{13}\).
This is compatible with $H\cup B_{13}$ because $u_2u_4\notin E(G)$, because $B_{13}$ is stable, and because every vertex of $B_{13}$ is adjacent in $H$ exactly to $u_1$ and $u_3$.
Therefore vertices of $A_2$ must be colored only with colors $1$ and $3$, since each vertex of $A_2$ is adjacent to $u_2$.  By Lemma~\ref{lem:HX} (2), $G[A_2]$ is a matching together with isolated vertices.  The difficulty is to coordinate this $2$-coloring of $A_2$ with the $K_2$-components of $G[Z_1]$.

We say a component $Q=\{q,q'\}$ of $G[Z_1]$ \textit{bad} if $qq'\in E(G)$ and both $q$ and $q'$ have a neighbor in $B_{13}$.  If $Q$ is not bad, then at least one of its two vertices has no neighbor in $B_{13}$, and that vertex may later receive color $2$ while the other receives color $4$.  If $Q$ is bad, then neither endpoint of $Q$ may receive color $2$, and therefore it will be convenient to require $N_{A_2}(Q)$ to be monochromatic in the colors $1,3$.

We now define an auxiliary graph $S$ on vertex set $A_2$ as follows.
For distinct $x,y\in A_2$, put $xy\in E(S)$ if and only if there exists a bad component $Q$ of $G[Z_1]$ such that \(x,y\in N_{A_2}(Q)\).
Thus each component of $S$ is intended to be monochromatic.

The key point is that no edge of $G[A_2]$ has both ends in the same component of $S$, and that after contracting the components of $S$, the matching edges of $G[A_2]$ form a bipartite graph.

\begin{lemma}\label{lem:bowtie}
	There exist no vertices $x,y,z\in A_2$ and bad components \(Q=\{q,q'\},\quad R=\{r,r'\}\) of $G[Z_1]$ such that \(xy\in E(G[A_2])\), $Q$ is complete to $\{x,z\}$, and $R$ is complete to $\{z,y\}$.
\end{lemma}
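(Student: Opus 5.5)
We argue by contradiction: fix $x,y,z\in A_2$ and bad components $Q=\{q,q'\}$, $R=\{r,r'\}$ as in the statement. The first step collects the forced (non)adjacencies.

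Since $xy\in E(G)$ and every component of $G[A_2]$ has at most two vertices by Lemma~\ref{lem:HX}~(2), $z$ is nonadjacent to both $x$ and $y$. (Adjacency to both would in any case give a $K_4$ on $\{x,y,z,u_2\}$.)

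Next, $Q\neq R$. Otherwise $Q$ is complete to $\{x,y\}$, and $\{q,q',x,y\}$ is a $K_4$. Hence $Q$ and $R$ are distinct components of $G[Z_1]$ and are anticomplete to each other.

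By Lemma~\ref{lem:HXres}~(3), $N_{A_2}(Q)$ is complete to $Q$. So if $y$ had a neighbor in $Q$, then $\{q,q',x,y\}$ would be a $K_4$. Thus $Q$ is anticomplete to $y$, and symmetrically $R$ is anticomplete to $x$.

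Consequently $q\!-\!x\!-\!y\!-\!r\!-\!z\!-\!q$ is an induced $C_5$. Moreover $u_2$ is adjacent to $x,y,z$ and to nothing in $Q\cup R$. The same holds with $q',r'$ in place of $q,r$.

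The second step exploits badness. Both $q$ and $q'$ have neighbors in $B_{13}$. The set $\{q,q',x,z\}$ induces a diamond with degree-three vertices $q,q'$, since $xz\notin E(G)$. Hence any neighbor $b\in B_{13}$ of $q$ must be adjacent to at least one of $q',x,z$; otherwise $\{q,q',x,z,b\}$ induces a dart. The same holds for neighbors of $q'$, and symmetrically for $R$ using the diamond $\{r,r',y,z\}$.

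I would first dispose of $b$ adjacent to $x$ or $z$. Then $b$ is a vertex of $B_{13}$ with neighbors in $A_2$, and $u_1$ (adjacent to $b$, anticomplete to $A_2\cup Z$) or $u_3$ yields an induced $P_6$. One candidate is $u_1\!-\!b\!-\!x\!-\!y\!-\!r\!-\!z$, or a variant running around the $C_5$, with the adjacencies of $b$ to the $C_5$ dictating which. The alternative is a dart/diamond configuration formed with $u_2$, $u_1$, $u_3$. In particular, since $u_1u_3\in E(G)$ and $b$ is complete to $\{u_1,u_3\}$, the set $\{b,u_1,u_3\}$ is a triangle. That triangle combined with a common neighbor in $H$ or the $C_5$ gives diamonds, and pendant vertices then give darts.

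That leaves the case where every $B_{13}$-neighbor of $q$ is adjacent to $q'$ and vice versa, i.e.\ a vertex $b\in B_{13}$ complete to $Q$ and anticomplete to $\{x,z\}$. Similarly there is $b'\in B_{13}$ complete to $R$ and anticomplete to $\{y,z\}$. Now I would look for a long induced path such as $u_1\!-\!b\!-\!q\!-\!x\!-\!y\!-\!r$, or $b\!-\!q\!-\!z\!-\!r\!-\!b'$ closed through $u_1$. For the latter, note that $b$ and $b'$ are nonadjacent because $B_{13}$ is stable by Lemma~\ref{lem:HX}~(3). Either $b=b'$, which produces a dart or $K_4$ around $z$, or $b\neq b'$, which produces an induced $P_6$ using $u_1$ or $u_3$ together with the $C_5$.

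The main obstacle is this final case analysis. One must track the adjacencies of $b,b'$ to $y$ (respectively $x$), to the other component, and to each other, and choose for each pattern the right $P_6$ or dart. I expect the cleanest route is to show first that $b$ is anticomplete to $\{y\}\cup R$ unless $b$ is complete to $R$ as well, and then to handle the ``common $B_{13}$-neighbor'' situation via a dart centred at $z$ or $b$.
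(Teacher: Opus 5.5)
Your preliminary facts are correct, and so is your dart on the diamond $\{q,q',x,z\}$. These facts are: $Q\neq R$, $Q$ anticomplete to $y$, $R$ anticomplete to $x$, and the induced $C_5$ $q\!-\!x\!-\!y\!-\!r\!-\!z$. You should also say that $z\notin\{x,y\}$, which follows from $K_4$-freeness. The genuine gap is that the proposal stops exactly where the argument has to happen. Both remaining cases are left as a search for ``the right $P_6$ or dart'', and the paths you name are not induced in general.

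Take a $B_{13}$-neighbor $b$ of $q$ that also sees $x$. Your path $u_1\!-\!b\!-\!x\!-\!y\!-\!r\!-\!z$ is induced only if $b$ misses $y$, $r$ and $z$, which you do not control. The triangle $\{b,u_1,u_3\}$ gives no dart either: its diamonds with $u_2$ or $u_4$ have degree-three vertices $u_1,u_3$, and neither has a neighbor in $A_2\cup Z$. The missing observation is local:
\begin{itemize}
\item if $bq'\in E(G)$, then $\{b,q,q',x\}$ is a $K_4$;
\item otherwise $\{b,q,q',x\}$ is a diamond with degree-three vertices $q,x$, and $u_2$ is a pendant at $x$. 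Indeed $u_2$ sees $x$ but not $b\in B_{13}$ or $q,q'\in Z$, so $\{u_2,b,q,q',x\}$ is a dart.
\end{itemize}
The same works with $z$ in place of $x$. With your dart, this shows that every $B_{13}$-neighbor $b$ of $q$ is complete to $Q$ and anticomplete to $\{x,z\}$.

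In that final case, your path $u_1\!-\!b\!-\!q\!-\!x\!-\!y\!-\!r$ is induced only when $b$ misses both $y$ and $r$. Your detour through a second vertex $b'$ closes a cycle through $u_1$ rather than producing a path, so the case analysis stays open. The idea you need is to lengthen the path at the $H$-end by $u_4$, which is adjacent to $u_1$ and to nothing in $\{b\}\cup A_2\cup Z$.
\begin{itemize}
\item In $u_4\!-\!u_1\!-\!b\!-\!q\!-\!x\!-\!y$ the only undetermined pair is $by$, so $P_6$-freeness forces $by\in E(G)$.
\item In $u_4\!-\!u_1\!-\!b\!-\!q\!-\!z\!-\!s$ with $s\in\{r,r'\}$, the only undetermined pair is $bs$, since $bz\notin E(G)$ and $Q$ is anticomplete to $R$. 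So $b$ is complete to $R$.
\end{itemize}
Then $\{b,y,r,r'\}$ is a $K_4$. Contrary to the route you anticipate, $b$ is forced to see $y$ and all of $R$, and the contradiction is a $K_4$, not a dart. This uses only that $q$ has a neighbor in $B_{13}$. The badness of $R$, the vertex $b'$, and the split $b=b'$ versus $b\neq b'$ are never needed.
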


\begin{proof}
	Assume that such $x,y,z,Q,R$ exist. 
    Since $G$ has no $K_4$, $Q$ is complete to $\{x,z\}$ and $R$ is complete to $\{z,y\}$, $xz,yz\notin E(G)$ and hence $z\notin \{x,y\}$. 
	Also $Q\neq R$, for otherwise the same component is complete to $\{x,y\}$ which gives a $K_4$.
	
	Since $Q$ is bad, choose $b\in B_{13}$ adjacent to $q$. 
	We first prove that $bq'\in E(G)$. 
    Suppose to the contrary that $bq'\notin E(G)$.
	If $bx,bz\notin E(G)$, then $\{b,q,q',x,y\}$ induces a dart, a contradiction. 
	Hence $b$ is adjacent to at least one of $x,z$.
	If $bx\in E(G)$, then $\{u_2,b,q,q',x\}$ induces a dart, a contradiction. 
	The same argument with $z$ in place of $x$ gives a contradiction if $bz\in E(G)$.
	Thus $bq'\in E(G)$.
	
	Now $b$ is adjacent to both $q$ and $q'$.  
    Since $G$ has no $K_4$, $bx,bz\notin E(G)$. 
    Since $u_4-u_1-b-q-x-y$ is not an induced $P_6$, $by\in E(G)$. 
	Let $s\in\{r,r'\}$. 
    Since $u_4-u_1-b-q-z-s$ is not an induced $P_6$, $bs\in E(G)$. 
    So $b$ is complete to $R$ and hence $\{b,y,r,r'\}$ induces a $K_4$, a contradiction.
    
	This proves Lemma~\ref{lem:bowtie}.
\end{proof}

\begin{lemma}\label{lem:no-loop}
	Let $x,y\in A_2$. 
    If $xy\in E(G)$, then $x$ and $y$ lie in distinct components of $S$.
\end{lemma}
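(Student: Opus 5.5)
We argue by contradiction. Suppose $xy\in E(G)$ but $x$ and $y$ lie in the same component of $S$. Choose a shortest path $x=w_0,w_1,\dots,w_k=y$ in $S$. By the definition of $E(S)$, for each $i$ there is a bad component $Q_i$ of $G[Z_1]$ with $w_{i-1},w_i\in N_{A_2}(Q_i)$. By Lemma~\ref{lem:HXres}~(3), $N_{A_2}(Q_i)$ is complete to $Q_i$, so $Q_i$ is complete to $\{w_{i-1},w_i\}$. Since $G$ is $K_4$-free and $Q_i$ is an edge, $N_{A_2}(Q_i)$ is a stable set. Hence $k\ge 2$, because otherwise a single bad component would be complete to the edge $xy$, giving a $K_4$. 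If $k=2$, then $z=w_1$ together with $Q=Q_1$ and $R=Q_2$ is exactly the configuration excluded by Lemma~\ref{lem:bowtie}. So the whole task is to reduce a general $k$ to the case $k=2$.

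For $k\ge 3$ I would use $P_6$-freeness together with the fixed diamond to produce a shortcut. The natural first step is to show that any two vertices of $A_2$ at distance $2$ in $S$ are actually adjacent in $S$. Take the configuration $w_0,Q_1,w_1,Q_2,w_2$ with $w_0w_2\notin E(S)$, so that in particular no bad component sees both $w_0$ and $w_2$. Consider the path $w_0-q_1-w_1-q_2-w_2$, where $q_i\in Q_i$. This path is induced provided $w_0w_2\notin E(G)$, $Q_1$ is anti-complete to $w_2$ (which holds by Lemma~\ref{lem:HXres}~(3) and the assumption), and $q_1q_2\notin E(G)$ (which holds since $Q_1,Q_2$ are distinct components). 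We can extend it by $u_2$, which is adjacent to every vertex of $A_2$ and so creates chords. So instead I would extend it through a neighbour $b\in B_{13}$ of a bad component, or through $u_1$ or $u_4$, imitating the $P_6$ arguments $u_4-u_1-b-q-\cdots$ in the proof of Lemma~\ref{lem:bowtie}. Fix $b\in B_{13}$ adjacent to $q_1$. The dart argument from Lemma~\ref{lem:bowtie} shows that $b$ is complete to $Q_1$ and anti-complete to $N_{A_2}(Q_1)$. Then $u_4-u_1-b-q_1-w_1-q_2$ being non-induced forces $bq_2\in E(G)$, and likewise $b$ is complete to $Q_2$.

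Iterating this along the path, a single vertex $b\in B_{13}$ becomes complete to every $Q_i$. Consequently $b$ is anti-complete to $\{w_0,\dots,w_k\}$, which is consistent. Now consider the edge $xy$: the path $u_4-u_1-b-q_1-x-y$ must not be induced, which forces $by\in E(G)$. But $y\in N_{A_2}(Q_k)$ and $b$ is complete to $Q_k$, so $\{b,y\}\cup Q_k$ induces a $K_4$, a contradiction. This propagation argument in fact handles every $k$ directly, without needing the shortcut reduction; it generalizes the final step of Lemma~\ref{lem:bowtie}. The key facts are that $b\in B_{13}$ is nonadjacent to $u_2,u_4$ and to $Z$ outside its neighbours, that $u_1$ has no neighbours in $A_2\cup Z$, and that $u_4$ is nonadjacent to $b$'s other neighbours.

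The main obstacle is verifying inducedness at each propagation step. For $u_4-u_1-b-q_i-w_i-s$ with $s\in Q_{i+1}$, we need $bw_i\notin E(G)$, which comes from the $K_4$-freeness of $\{b,w_i\}\cup Q_i$ once $b$ is complete to $Q_i$. We also need $q_is\notin E(G)$, which holds since distinct components of $G[Z_1]$ are anti-complete, and $u_1,u_4$ have no neighbours in $A_2\cup Z$. The propagation keeps the same $b$ throughout, and the step showing that $b$ is complete to $Q_{i+1}$ should give both endpoints of $Q_{i+1}$ directly by choosing $s$ to be each endpoint in turn.
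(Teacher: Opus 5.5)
Your proof is correct, but for $S$-paths of length at least $3$ it takes a different route from the paper's.

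\textbf{The paper's route.} It takes a shortest path $x=z_0,\dots,z_t=y$ in $S$ and settles $t=2$ by Lemma~\ref{lem:bowtie}. For $t\ge 3$ it uses the path $y-x-q-z_1-r-z_2$, which lies entirely in $A_2\cup Z_1$. Minimality of the path excludes the chords $xr$ and $qz_2$, so $P_6$-freeness forces $yr\in E(G)$. Then $yz_1\in E(S)$, giving a shorter path.

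\textbf{Your route.} You fix one $b\in B_{13}$ adjacent to $Q_1$ and use dart-freeness once to make $b$ complete to $Q_1$. You then push completeness along the whole path via the paths $u_4-u_1-b-q_i-w_i-s$. You close with $u_4-u_1-b-q_1-x-y$ and the $K_4$ on $\{b,y\}\cup Q_k$.

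\textbf{Comparison.} Your argument is the proof of Lemma~\ref{lem:bowtie} iterated, so it subsumes that lemma rather than reducing to it. It also needs no minimality: if $Q_{i+1}=Q_i$ the step is vacuous, and otherwise distinct components of $G[Z_1]$ are anti-complete. As a by-product, $b$ is complete to every bad component witnessing an edge of the $S$-component of $x$, hence anti-complete to that whole $S$-component. The paper's route instead confines the use of $H$ and $B_{13}$ to the length-two case and treats longer paths with a purely local $P_6$ inside $A_2\cup Z_1$.

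\textbf{One point for the write-up.} State the initial dart explicitly. Suppose $bq_1'\notin E(G)$. If $b$ misses $w_0$ and $w_1$, then $\{b,q_1,q_1',w_0,w_1\}$ is a dart on the diamond $\{q_1,q_1',w_0,w_1\}$. If $bw_j\in E(G)$, then $\{u_2,b,q_1,q_1',w_j\}$ is a dart. Do not copy the set $\{b,q,q',x,y\}$ printed in the proof of Lemma~\ref{lem:bowtie}: it induces a bull or a house, not a dart. The intended set there is $\{b,q,q',x,z\}$.
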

\begin{proof}
	Suppose not.
	Let $x=z_0,z_1,\ldots,z_t=y$ be a shortest path from $x$ to $y$ in $S$. 
    Since $G$ has no $K_4$, $t\geq 2$. 
	If $t=2$, then, by the definition of $S$, there are bad components $Q$ and $R$ such that $Q$ is complete to $\{z_0,z_1\}=\{x,z_1\}$ and $R$ is complete to $\{z_1,z_2\}=\{z_1,y\}$.
	This contradicts Lemma~\ref{lem:bowtie} since $xy\in E(G)$.
	Hence $t\geq 3$.
	
	Let $Q=qq'$ be a bad component witnessing the edge $z_0z_1$, and let $R=rr'$ be a bad component witnessing the edge $z_1z_2$. 
	By the minimality of the path, $Q\neq R$; otherwise $Q$ is complete to $z_0$ and $z_2$, giving an edge $z_0z_2$ and a shorter path between $x$ and $y$ in $S$. 
    
    Consider path $y-x-q-z_1-r-z_2$. 
    If $y$ is adjacent to $q$, then $y$ is complete to $Q$ by Lemma \ref{lem:HXres} (3), and so $\{q,q',x,y\}$ induces a $K_4$, a contradiction. 
    So we may assume that $qy\notin E(G)$. 
    Since each component of $A_2$ is either a $K_2$ or a $K_1$, $yz_1,yz_2,xz_2\notin E(G)$. 
    Since $Q,R$ are distinct bad component, $xr,qz_2\notin E(G)$. 
    Then $yr\in E(G)$; otherwise $y-x-q-z_1-r-z_2$ is an induced $P_6$. 
    This implies that $yz_1$ is an edge in $S$ and hence $x-z_1-y$ is an induced $P_3$ in $S$, which contradicts $t\geq 3$. 
	This proves Lemma~\ref{lem:no-loop}.
\end{proof}
Let $\Gamma$ be the graph obtained by contracting each component of $S$ to a single vertex and adding an edge between two components whenever there is an edge of $G[A_2]$ with one end in each component. 
\begin{lemma}\label{lem:quotient-bipartite}
	 $\Gamma$ is bipartite.
\end{lemma}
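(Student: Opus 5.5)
We argue by contradiction and take a shortest odd cycle in $\Gamma$; by Lemma~\ref{lem:no-loop}, $\Gamma$ has no loops. Such a cycle lifts to a closed walk in $G[A_2]\cup S$: there are matching edges $x_1y_1,\dots,x_ky_k$ of $G[A_2]$ with $k$ odd, and for each $i$ the vertices $y_i$ and $x_{i+1}$ (indices mod $k$) lie in the same component $S_i$ of $S$. The components $S_1,\dots,S_k$ are distinct. Since $G[A_2]$ is a matching plus isolated vertices (Lemma~\ref{lem:HX}(2)), the vertices $x_i,y_i$ are distinct, and $x_i$ is nonadjacent in $G$ to every vertex of $A_2$ other than $y_i$.

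The first step is to shorten the $S$-paths. For each $i$, choose a shortest path $y_i=z_0,z_1,\dots,z_t=x_{i+1}$ in $S$, where consecutive vertices are witnessed by distinct bad components. I will then run the argument of Lemma~\ref{lem:no-loop} with the $G$-edge $x_iy_i$ in the role of $xy$. Since $y_i$ has no $G$-neighbour in $A_2$ other than $x_i$, the vertices $x_i-y_i-q-z_1-r-z_2$ form a path, where $q\in Q$ and $r\in R$ come from the witnessing bad components of $z_0z_1$ and $z_1z_2$. This path must have a chord, and the only possible chord is $x_i r$ (or $x_iq$, which is excluded by $K_4$-freeness as in that proof). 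A chord $x_ir$ makes $x_i$ adjacent in $S$ to $z_1$ and $z_2$. Pushing this along, I expect to show that $x_i$ is $S$-adjacent to every vertex of the shortest path from $z_2$ onward. Symmetrically, using the edge $x_{i+1}y_{i+1}$, $y_{i+1}$ is $S$-adjacent to the path vertices near $x_{i+1}$. The aim is the conclusion that $x_i$ and $y_{i+1}$ (or neighbouring matched vertices) lie in the same component of $S$. That would merge $S_{i-1}$ and $S_i$, contradicting the distinctness of the cycle's components, unless each $S$-path has length at most $1$.

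The second step handles the reduced configuration, where every $S$-path has length at most $1$. Then each $y_i$ and $x_{i+1}$ either coincide or have a common bad component $Q_i$ complete to both. Now consider the induced cycle in $G$ formed by the $x_iy_i$ edges and the vertices $q_i\in Q_i$. Its length is at least $k\cdot 2\ge 6$ when $k\ge3$, and it yields an induced $P_6$ as soon as its chords are controlled. The chords are controlled because distinct bad components have disjoint, non-adjacent vertex sets in $Z_1$, and because a $q_i$ adjacent to some $x_j$ forces an $S$-edge that collapses components. The $k=1$ case is a loop, which is excluded. For $k=3$ I have to check the hexagon $x_1y_1q_1x_2y_2q_2x_3y_3q_3$ directly: a 9-cycle in which the only possible chords are $q$--$A_2$ edges. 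Each such chord would merge components, so the 9-cycle is induced and contains an induced $P_6$.

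I expect the main obstacle to be the first step: proving that long $S$-paths either collapse or create $P_6$s, without losing induced-ness. I would first try to imitate Lemma~\ref{lem:no-loop} exactly, inducting along the path with $B_{13}$ vertices playing the role of $b$, as in Lemma~\ref{lem:bowtie}, to rule out troublesome adjacencies.
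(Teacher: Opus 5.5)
Your outline follows the paper's route: take a shortest odd cycle in $\Gamma$, show each connecting $S$-path has length one via $x_i-y_i-q-z_1-r-z_2$, then find an induced $P_6$ through consecutive matching edges. But the first step, which you flag as the main obstacle, is left unproved. After correctly reducing to the single candidate chord $x_ir$, you propose an induction that pushes $x_i$ along the $S$-path. Its stated target is also off: $x_i$ and $y_{i+1}$ in one component would identify $S_{i-1}$ with $S_{i+1}$, not with $S_i$. No induction is needed, because the contradiction is already in your own sentence. If $x_ir\in E(G)$, then $x_i\in N_{A_2}(R)$, so $x_iz_1\in E(S)$. Since $z_1$ is $S$-adjacent to $y_i$, the vertices $x_i$ and $y_i$ lie in one component of $S$ even though $x_iy_i\in E(G)$. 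This contradicts Lemma~\ref{lem:no-loop} outright (equivalently, $S_{i-1}=S_i$). So $x_i-y_i-q-z_1-r-z_2$ is itself an induced $P_6$; the other non-edges are:
\begin{itemize}
\item $y_ir$ and $qz_2$, by minimality of the path;
\item $qr$, because $Q\neq R$;
\item $x_iz_1$ and $x_iz_2$, because $G[A_2]$ is a matching;
\item $y_iz_1$, $y_iz_2$ and $z_1z_2$, by Lemma~\ref{lem:no-loop};
\item $x_iq$, by $K_4$-freeness.
\end{itemize}
This is exactly how the paper disposes of long $S$-paths.

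Your second step needs two repairs. First, exclude the case $y_i=x_{i+1}$ rather than carry it along. Since $G[A_2]$ is a matching, $y_i=x_{i+1}$ forces $x_i=y_{i+1}$, hence $S_{i-1}=S_{i+1}$, which is impossible when $k\ge 3$. Second, the bound ``length at least $2k\ge 6$'' does not give what you want, since an induced $C_6$ contains no induced $P_6$. Once coincidences are excluded, the cycle $x_1y_1q_1\cdots x_ky_kq_k$ has length $3k\ge 9$ (it is not a hexagon). Your chord analysis then does make it induced: a chord $q_iv$ with $v\in A_2$ would put $v$ into $S_i$, and distinct $Q_i$ are non-adjacent. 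The paper avoids the whole cycle and checks only the six vertices $x_1-y_1-q_1-x_2-y_2-q_2$, which needs nothing beyond $S_0,S_1,S_2$ being distinct.
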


\begin{proof}
	By Lemma~\ref{lem:no-loop}, no edge of $G[A_2]$ has both ends in the same component of $S$, so $\Gamma$ has no loop.
	Suppose that $\Gamma$ contains an odd cycle, and choose a shortest odd cycle $C=C_0C_1\cdots C_{k-1}C_0$, where $k\geq 3$ is odd.
	For each $i$ modulo $k$, choose an edge \(x_i y_i\in E(G[A_2])\) with $x_i\in C_i$ and $y_i\in C_{i+1}$.
	Inside the component $C_{i+1}$, let $P_i$ be a shortest path from $y_i$ to $x_{i+1}$ in $S$.
	
	We first show that every $P_i$ has length one.
	Suppose some $P_i$ has length at least $2$, and write its first three vertices as \(z_0=y_i, z_1, z_2\).
	Let $Q$ be a bad component witnessing the edge $z_0z_1$, and let $R$ be a bad component witnessing the edge $z_1z_2$.
	Choose $q\in Q$ and $r\in R$.
    Since $z_0,z_1,z_2\in C_i$, $\{z_0,z_1,z_2\}$ is a stable set. 
    By the minimality of $P_i$, $q$ is not adjacent to $z_2$, $r$ is not adjacent to $y_i$. 
    Since $x_i\notin C_{i+1}$, $x_iz_1,x_iz_2\notin E(G)$ by Lemma~\ref{lem:no-loop}. 
    Since each component of $A_2$ is either a $K_2$ or a $K_1$, $x_iq,x_ir\notin E(G)$. 
    It follows that $x_i-y_i-q-z_1-r-z_2$ is an induced $P_6$, a contradiction. 
	Therefore every $P_i$ has length one.
	
	Consequently, for each $i$ there exists a bad component $Q_i$ such that $Q_i$ is complete to $\{y_i,x_{i+1}\}$.
	Take the first two consecutive edges $x_0y_0,x_1y_1$ of the cycle, and choose vertices \(q_0\in Q_0, q_1\in Q_1\).
	Consider the path $x_0-y_0-q_0-x_1-y_1-q_1$. 
    Since each component of $A_2$ is either a $K_2$ or a $K_1$, $x_0x_1,x_0y_1,y_0x_1,y_0y_1\notin E(G)$. 
    By Lemma \ref{lem:HXres} (3) and since $G$ has no $K_4$, $x_0q_0,x_1q_1,y_1q_0\notin E(G)$. 
    By Lemma \ref{lem:no-loop}, $x_0q_1,y_0q_1\notin E(G)$. 
    It follows that $x_0-y_0-q_0-x_1-y_1-q_1$ is an induced $P_6$, a contradiction. 
	
	Hence $\Gamma$ has no odd cycle, and so $\Gamma$ is bipartite.
	This proves Lemma~\ref{lem:quotient-bipartite}.
\end{proof}

\begin{lemma}\label{prop:W4}
    Under the hypotheses of Lemma~\ref{lem:HXres}, the graph $G[W]$ is $4$-colorable.
\end{lemma}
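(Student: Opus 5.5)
We construct the coloring announced before Lemma~\ref{lem:bowtie}: $u_1\mapsto 1$, $u_3\mapsto 3$, and $u_2,u_4$ and all of $B_{13}$ receive color $2$. This is proper, since $u_2u_4\notin E(G)$, $B_{13}$ is stable, and $N_H(b)=\{u_1,u_3\}$ for every $b\in B_{13}$. Each vertex of $A_2$ must then receive a color in $\{1,3\}$. By Lemma~\ref{lem:quotient-bipartite}, $\Gamma$ has a proper $2$-coloring with colors $1,3$. We give each vertex $x\in A_2$ the color of the component of $S$ containing $x$. Every edge of $G[A_2]$ joins distinct components of $S$ (Lemma~\ref{lem:no-loop}) that are adjacent in $\Gamma$, so this coloring of $A_2$ is proper. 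Moreover, $A_2$ is anti-complete to $V(H)\setminus\{u_2\}$, and $u_2$ has color $2$. Edges between $A_2$ and $B_{13}$ are harmless, since $B_{13}$ has color $2$ and $A_2$ uses only $1,3$. By construction, $N_{A_2}(Q)$ is monochromatic for every bad component $Q$, because all of $N_{A_2}(Q)$ lies in one component of $S$.

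Next we color $Z_2$. By Lemma~\ref{lem:HXres}(2), $Z_2$ is anti-complete to $Z_1\cup A_2\cup V(H)$, so only its edges to $B_{13}$ matter. I would first check whether Lemma~\ref{lem:3-colorable for Z} can be applied in a form that avoids color $2$ on neighbors of $B_{13}$. The cleaner route is to note that $Z_2$ has no neighbor in $A_2$, so $W$ contains nothing that separates $Z_2$ from the remaining colored part except $B_{13}$. In practice I would look at the coloring produced in the proof of Lemma~\ref{lem:3-colorable for Z}: the vertices $b,c$ used there may be taken in $B_{13}$, and $F\subseteq N(b)$. The fallback, which I expect to work, is to permute the three colors of a $3$-coloring of $Z_2$ to $\{1,3,4\}$. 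This handles the $B_{13}$ constraint for free: $B_{13}$ is colored $2$ and $Z_2$ avoids $2$. Since $Z_2$ is anti-complete to $A_2\cup V(H)\cup Z_1$, nothing else interacts with it.

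It remains to color $Z_1$, component by component. Let $Q$ be a component of $G[Z_1]$. By Lemma~\ref{lem:HXres}(3), $Q$ is a $K_1$ or a $K_2$, and $N_{A_2}(Q)$ is complete to $Q$. Also $Q$ is anti-complete to $V(H)$, to $Z_2$, and to the other components of $Z_1$, so its only colored neighbors lie in $A_2\cup B_{13}$.
\begin{itemize}
\item If $Q=\{q\}$, then $N_{A_2}(q)$ is a clique of $G$: it is complete to $q$ and $G$ is $K_4$-free, so it has size at most $2$. Its colors therefore lie in $\{1,3\}$, and we give $q$ color $4$.
\item If $Q=\{q,q'\}$ is not bad, one endpoint, say $q$, has no neighbor in $B_{13}$. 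We color $q$ with $2$ (its neighbors in $A_2$ use $1,3$) and $q'$ with $4$.
\item If $Q$ is bad, $N_{A_2}(Q)$ is monochromatic, say in color $c\in\{1,3\}$. We color $q,q'$ with the two colors of $\{1,3,4\}\setminus\{c\}$, which avoids $2$ on $B_{13}$-neighbors.
\end{itemize}

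The main obstacle is already absorbed by Lemmas~\ref{lem:bowtie}--\ref{lem:quotient-bipartite}: making $N_{A_2}(Q)$ monochromatic for every bad $Q$ while keeping $A_2$ properly colored. The only remaining point to verify carefully is that no edge has been overlooked, in particular edges between $Z_1$ and $B_{13}$ when $q$ receives color $2$, which is excluded by the choice of $q$. One should also double-check that $Z_2$ indeed admits a $3$-coloring; this is given by Lemma~\ref{lem:HXres}(2).
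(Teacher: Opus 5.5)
Your proposal is correct and is essentially the paper's proof: you properly $2$-color $\Gamma$ with colors $1,3$ and pull this back to $A_2$, color $H\cup B_{13}$ as announced, give singleton components of $G[Z_1]$ color $4$, give non-bad edges colors $2,4$ (with $2$ on an endpoint having no neighbor in $B_{13}$), give bad edges the two colors of $\{1,3,4\}\setminus\{c\}$, and color $Z_2$ from $\{1,3,4\}$. The one blemish is your aside that $N_{A_2}(q)$ is a clique when $Q=\{q\}$, which does not follow (being complete to $q$ does not make a set a clique); the claim is not needed, though, since every vertex of $A_2$ already has a color in $\{1,3\}$ and $B_{13}$ has color $2$, so color $4$ is free for $q$ regardless.
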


\begin{proof}
	By Lemma~\ref{lem:quotient-bipartite}, the graph $\Gamma$ is bipartite.
	Color the two sides of $\Gamma$ with colors $1$ and $3$, and give every vertex of $A_2$ the color of its component in $S$.
	Then every component of $S$ is monochromatic, and every edge of $G[A_2]$ has its ends in opposite sides of $\Gamma$, hence receives colors $1$ and $3$.
	Now we give color \(1\) to \(u_1\), color \(3\) to \(u_3\), color \(2\) to both \(u_2\) and \(u_4\), and also color \(2\) to every vertex of \(B_{13}\).
	This is proper on $H\cup B_{13}$, and no vertex of $A_2$ receives color $2$.
	
	It remains to color $Z$.  Let $Q$ be a component of $G[Z_1]$.  By Lemma~\ref{lem:HXres} (3), $Q$ is $K_1$ or $K_2$.	
	If $Q$ is $K_1$, color its unique vertex $4$.
	If $Q=\{q,q'\}$ is $K_2$ and is not bad, choose an endpoint, say $q$, with no neighbor in $B_{13}$, and gives \(2\) to \(q\), and color \(4\) to \(q'\).
	This is proper since $Q\subseteq Z$, all neighbors of $Q$ in $A_2$ use colors $1,3$, and the vertex colored $2$ has no neighbor in $B_{13}$.
	If $Q=\{q,q'\}$ is bad, then all vertices in $N_{A_2}(Q)$ lie in one component of $S$, and hence have the same color, either $1$ or $3$.
	If that color is $1$, color $q,q'$ with $3$ and $4$; if that color is $3$, color $q,q'$ with $1$ and $4$.
	This is proper since $Q$ is anti-complete to $H$, both endpoints avoid color $2$, and they avoid the common color of their neighbors in $A_2$.
	
	Finally, by Lemma~\ref{lem:HXres} (2), $Z_2$ is anti-complete to $Z_1\cup A_2\cup V(H)$ and $\chi(G[Z_2])\leq 3$.  
    Color $G[Z_2]$ using colors $1,3,4$.  
    This causes no conflict with $B_{13}$, since every vertex of $B_{13}$ has color $2$, and it causes no conflict with $Z_1\cup A_2\cup V(H)$ because $Z_2$ is anti-complete to that set.
	
	Thus $G[W]$ is $4$-colorable.
	This proves Proposition~\ref{prop:W4}.
\end{proof}


	\section{Proof of the main theorem}\label{part3}
	
	\begin{proof}[\bf Proof of Theorem~\ref{thm:main}]
		Suppose the theorem is false, and let $G$ be a counterexample with $|V(G)|$ minimum.  
        Then $G$ is connected.  
        Moreover, if $K$ is a clique cutset, each block together with $K$ is a proper induced subgraph and hence is $6$-colorable; since $|K|\leq 3$, the color names can be permuted so that the colorings agree on $K$, giving a $6$-coloring of $G$. 
        So we may assume that $G$ has no clique cutset. 
		
		By minimality, every proper induced subgraph of $G$ is $6$-colorable.
		Hence $\chi(G)=7$. 
		Thus $G$ is $7$-critical, and in particular $\delta(G)\geq 6$. 
		If $G$ has no induced diamond, then $G$ is $\{P_6,\text{diamond}\}$-free.
		By the Theorem~\ref{thm:Cameron}, $\chi(G)\leq \omega(G)+3$. 
		Since $G$ is $K_4$-free, $\omega(G)\leq 3$, and therefore $\chi(G)\leq 6$, a contradiction.
		Hence $G$ contains an induced diamond $H$ as in Section~\ref{part2}.
		Use the corresponding partition of $V(G)\setminus V(H)$ as in Section~\ref{part2}.
		By Lemma~\ref{lem:HX} (5), if $C_2\neq\emptyset$ or $C_4\neq\emptyset$, then $\chi(G)\leq 6$, a contradiction.
		Therefore $C_2=C_4=\emptyset$.

		We claim that $B_{13}\neq\emptyset$.  
        Suppose $B_{13}=\emptyset$.
		Then, by Lemma~\ref{lem:HX} (1) and since $C_2=C_4=\emptyset$, the only vertices outside $H$ adjacent to $u_1$ can lie in $B_{12}\cup B_{41}$.
		By Lemma~\ref{lem:HX} (3), $|B_{12}|\leq 1, |B_{41}|\leq 1$.
		Thus $d_G(u_1)\leq 3+1+1=5$, contradicting $\delta(G)\geq 6$.
		Hence $B_{13}\neq\emptyset$.
		By Lemma~\ref{lem:HX} (4), $B_{12}=B_{23}=B_{34}=B_{41}=\emptyset$.
		Together with Lemma~\ref{lem:HX} (1) and $C_2=C_4=\emptyset$, this gives $V(G)=V(H)\cup A_2\cup A_4\cup B_{13}\cup B_{24}\cup Z$.
		Set $W=V(H)\cup A_2\cup B_{13}\cup Z$ and $U=A_4\cup B_{24}$.
		By Lemma~\ref{prop:W4}, $\chi(G[W])\leq 4$.
		By Lemma~\ref{lem:HXres} (1), $\chi(G[U])\leq 2$.  
        Coloring $G[W]$ with colors $1,2,3,4$ and $G[U]$ with colors $5,6$ gives a $6$-coloring of $G$, regardless of the edges between $W$ and $U$.

		It remains to show that the bound in Theorem~\ref{thm:main} is tight. 
		Consider the 27-vertex 9-regular graph \(G\) depicted in~\autoref{fig:house19273}. 
		Owing to its high symmetry, the verification that \(G\) is $\{P_6, \text{dart}, K_4\}$-free reduces to a finite number of cases and is easily checked. 
		The graph is recorded in the House of Graphs database\footnote{\url{https://houseofgraphs.org/graphs/19273}} as a graph on 27 vertices with \(\omega(G)=3\) and \(\chi(G)=6\).  
		This example confirms that the upper bound in Theorem~\ref{thm:main} cannot be improved.
		
		In conclusion, we complete the proof of Theorem~\ref{thm:main}.
	\end{proof}
	
\begin{figure}[htbp]
	\centering
	\tikzstyle{v}=[circle, draw, solid, fill=black, inner sep=0pt, minimum width=5pt]
	\begin{tikzpicture}[scale=1.0]
		\node[v] (0) at (0.31537079828623416, 0.11848533500937997) {};
		\node[v] (1) at (-0.8571428571428541, 2.149338558780869) {};
		\node[v] (2) at (-2.029656512571945, 0.11848533500937819) {};
		\node[v] (3) at (-1.7274143833643754, 1.8325856275068177) {};
		\node[v] (4) at (1.7536717215217015, 3.9068840633207014) {};
		\node[v] (5) at (-2.19047619047619, 1.0305390505444965) {};
		\node[v] (6) at (-2.2463282784782983, -3.0213191669548127) {};
		\node[v] (7) at (1.4878844537153215, -0.5584657395811163) {};
		\node[v] (8) at (2.6603981091444133, -1.2354168141716146) {};
		\node[v] (9) at (0.06898075708077211, -1.7490673081032506) {};
		\node[v] (10) at (1.809523809523811, 1.2656416914891166) {};
		\node[v] (11) at (-0.39408105003104277, -0.47681544925168673) {};
		\node[v] (12) at (-2.597685909585894, 2.869734845413758) {};
		\node[v] (13) at (-0.8571428571428559, 3.503240707961862) {};
		\node[v] (14) at (-1.3202046642546685, -0.47681544925168673) {};
		\node[v] (15) at (-4.857142857142858, 1.5007443324337348) {};
		\node[v] (16) at (-0.8571428571428559, 4.857142857142858) {};
		\node[v] (17) at (0.8834001953001831, 2.869734845413758) {};
		\node[v] (18) at (3.1428571428571423, 1.5007443324337348) {};
		\node[v] (19) at (0.47619047619047805, 1.0305390505444942) {};
		\node[v] (20) at (-3.523809523809522, 1.2656416914891166) {};
		\node[v] (21) at (-3.467957435807414, 3.9068840633207014) {};
		\node[v] (22) at (-4.374683823430123, -1.2354168141716146) {};
		\node[v] (23) at (-3.202170168001033, -0.5584657395811199) {};
		\node[v] (24) at (0.013128669078662725, 1.8325856275068162) {};
		\node[v] (25) at (-1.7832664713664839, -1.7490673081032506) {};
		\node[v] (26) at (0.5320425641925874, -3.0213191669548074) {};
		
		\path[draw, blue, thick]
		(0) edge node {} (1) 
		(0) edge node {} (2) 
		(0) edge node {} (3) 
		(0) edge node {} (4) 
		(0) edge node {} (5) 
		(0) edge node {} (6) 
		(0) edge node {} (7) 
		(0) edge node {} (8) 
		(0) edge node {} (9) 
		(0) edge node {} (10) 
		(1) edge node {} (2) 
		(1) edge node {} (11) 
		(1) edge node {} (12) 
		(1) edge node {} (13) 
		(1) edge node {} (14) 
		(1) edge node {} (15) 
		(1) edge node {} (16) 
		(1) edge node {} (17) 
		(1) edge node {} (18) 
		(2) edge node {} (19) 
		(2) edge node {} (20) 
		(2) edge node {} (21) 
		(2) edge node {} (22) 
		(2) edge node {} (23) 
		(2) edge node {} (24) 
		(2) edge node {} (25) 
		(2) edge node {} (26) 
		(3) edge node {} (4) 
		(3) edge node {} (11) 
		(3) edge node {} (12) 
		(3) edge node {} (13) 
		(3) edge node {} (14) 
		(3) edge node {} (19) 
		(3) edge node {} (20) 
		(3) edge node {} (21) 
		(3) edge node {} (22) 
		(4) edge node {} (15) 
		(4) edge node {} (16) 
		(4) edge node {} (17) 
		(4) edge node {} (18) 
		(4) edge node {} (23) 
		(4) edge node {} (24) 
		(4) edge node {} (25) 
		(4) edge node {} (26) 
		(5) edge node {} (6) 
		(5) edge node {} (11) 
		(5) edge node {} (12) 
		(5) edge node {} (15) 
		(5) edge node {} (16) 
		(5) edge node {} (19) 
		(5) edge node {} (20) 
		(5) edge node {} (23) 
		(5) edge node {} (24) 
		(6) edge node {} (13) 
		(6) edge node {} (14) 
		(6) edge node {} (17) 
		(6) edge node {} (18) 
		(6) edge node {} (21) 
		(6) edge node {} (22) 
		(6) edge node {} (25) 
		(6) edge node {} (26) 
		(7) edge node {} (8) 
		(7) edge node {} (11) 
		(7) edge node {} (13) 
		(7) edge node {} (15) 
		(7) edge node {} (17) 
		(7) edge node {} (19) 
		(7) edge node {} (21) 
		(7) edge node {} (23) 
		(7) edge node {} (25) 
		(8) edge node {} (12) 
		(8) edge node {} (14) 
		(8) edge node {} (16) 
		(8) edge node {} (18) 
		(8) edge node {} (20) 
		(8) edge node {} (22) 
		(8) edge node {} (24) 
		(8) edge node {} (26) 
		(9) edge node {} (10) 
		(9) edge node {} (11) 
		(9) edge node {} (14) 
		(9) edge node {} (16) 
		(9) edge node {} (17) 
		(9) edge node {} (20) 
		(9) edge node {} (21) 
		(9) edge node {} (23) 
		(9) edge node {} (26) 
		(10) edge node {} (12) 
		(10) edge node {} (13) 
		(10) edge node {} (15) 
		(10) edge node {} (18) 
		(10) edge node {} (19) 
		(10) edge node {} (22) 
		(10) edge node {} (24) 
		(10) edge node {} (25) 
		(11) edge node {} (18) 
		(11) edge node {} (22) 
		(11) edge node {} (24) 
		(11) edge node {} (25) 
		(11) edge node {} (26) 
		(12) edge node {} (17) 
		(12) edge node {} (21) 
		(12) edge node {} (23) 
		(12) edge node {} (25) 
		(12) edge node {} (26) 
		(13) edge node {} (16) 
		(13) edge node {} (20) 
		(13) edge node {} (23) 
		(13) edge node {} (24) 
		(13) edge node {} (26) 
		(14) edge node {} (15) 
		(14) edge node {} (19) 
		(14) edge node {} (23) 
		(14) edge node {} (24) 
		(14) edge node {} (25) 
		(15) edge node {} (20) 
		(15) edge node {} (21) 
		(15) edge node {} (22) 
		(15) edge node {} (26) 
		(16) edge node {} (19) 
		(16) edge node {} (21) 
		(16) edge node {} (22) 
		(16) edge node {} (25) 
		(17) edge node {} (19) 
		(17) edge node {} (20) 
		(17) edge node {} (22) 
		(17) edge node {} (24) 
		(18) edge node {} (19) 
		(18) edge node {} (20) 
		(18) edge node {} (21) 
		(18) edge node {} (23) 
		(19) edge node {} (26) 
		(20) edge node {} (25) 
		(21) edge node {} (24) 
		(22) edge node {} (23);
	\end{tikzpicture}
	\caption{The 27-vertex 9-regular graph from House of Graphs \#19273.}
	\label{fig:house19273}
\end{figure}

\vspace{6mm}

\n{\bf Acknowledgements:} 
The research is partially supported by the Foundation for Cultivated Young Talents of Fujian Province, China (Grant No. 2026350294), by the Natural Science Foundation of Fujian Province, China (Grant No. 2026J001968), by the Youth Foundation of Fujian Province (Grant No. JZ240035), by the Minnan Normal University Foundation (Grant No. KJ2023002), and by the Fujian Key Laboratory of Granular Computing and Applications (Minnan Normal University), the Institute of Meteorological Big Data-Digital Fujian, and the Fujian Key Laboratory of Data Science and Statistics.


%

\subsection*{Data availability}
Data sharing is not applicable to this article as no datasets were generated or analysed during the current study.

\subsection*{Conflict of interest}
The authors declare that they have no known competing financial interests or personal relationships that could have appeared to influence the work reported in this paper.

\end{document}